\newtheorem{corollary}{Corollary}
\newtheorem{theorem}{Theorem}
\newtheorem{lemma}{Lemma}
\theoremstyle{remark}
\newtheorem*{remark}{Remark}
\numberwithin{equation}{section}
\renewcommand{\Re}{\mathfrak{Re}}
\begin{document}

\title{Moments of the Riemann zeta function at its local extrema}

\author{Andrew Pearce-Crump}
\address{School of Mathematics, Fry Building, Woodland Road, Bristol, BS8 1UG, United Kingdom}
\email{andrew.pearce-crump@bristol.ac.uk}

\begin{abstract}
Conrey, Ghosh and Gonek studied the first moment of the derivative of the Riemann zeta function evaluated at the non-trivial zeros of the zeta function, resolving a problem known as Shanks' conjecture. Conrey and Ghosh studied the second moment of the Riemann zeta function evaluated at its local extrema along the critical line to leading order. 

In this paper we combine the two results, evaluating the first moment of the zeta function and its derivatives at the local extrema of zeta along the critical line, giving a full asymptotic. We also consider the factor from the functional equation for the zeta function at these extrema.
\end{abstract}

\maketitle

\section{Background}\label{sect:Background}
Shanks' Conjecture \cite{ShanksConj} is that 
\[
\zeta '(\rho) \text{ is real and positive in the mean}
\]
as $\rho = \beta + i \gamma$ ranges over non-trivial zeros of the Riemann zeta function $\zeta (s)$. This was proved by Conrey, Ghosh and Gonek \cite{CGG88}, who showed 
\begin{equation}
\sum_{0 < \gamma \leq T} \zeta ' (\rho) = \frac{T}{4 \pi} (\log T)^2 + O (T \log T) \label{1stDeriv}
\end{equation}
as $T \rightarrow \infty$ as a side calculation while giving a proof that there are infinitely many simple zeros of the Riemann zeta function. Fujii \cite{Fuj94,FujiiDist} found a full asymptotic for \eqref{1stDeriv}, with a power-saving error term under the Riemann Hypothesis.

The Generalised Shanks' Conjecture can be stated that on average,
\[
\zeta^{(n)}(\rho) \text{ is real and positive if $n$ is odd, negative if $n$ is even.}
\]
Kaptan, Karabulut and Y{\i}ld{\i}r{\i}m \cite{KKY11}, using similar ideas to Conrey, Ghosh and Gonek, found that
\begin{equation}\label{nthderiv}
\sum_{0 < \gamma \leq T} \zeta^{(n)} (\rho) =
  \frac{(-1)^{n+1}}{n+1} \frac{T}{2 \pi} (\log T)^{n+1} + O \bigl(T (\log T)^n \bigr)
\end{equation}
as $T \rightarrow \infty$ which clearly implies and confirms the previously unnoticed Generalised Shanks' Conjecture. Hughes and Pearce-Crump \cite{HugPC22} found a full asymptotic for \eqref{nthderiv}, with a power-saving error term under the Riemann Hypothesis. We state this result in Theorem~\ref{thm:HugPC} in Section~\ref{sect:PrelimLem}. For a full history of this problem see \cite{HugMarPea24}.

Under the Riemann Hypothesis, Conrey and Ghosh \cite{ConGho85} showed that
    \begin{equation}\label{CGLead}
       \sum_{0< \gamma \leq T} \max_{\gamma \leq t \leq \gamma^+} \left| \zeta \left( \tfrac{1}{2} + it \right) \right|^2 \sim \frac{e^2 - 5}{4 \pi} T (\log T)^2
    \end{equation}
as $T\rightarrow \infty$ where $\gamma \leq \gamma^+$ are successive ordinates of non-trivial zeros of $\zeta (s)$.

In \cite{HugLugPea24}, Hughes, Lugmayer and Pearce-Crump calculated the full asymptotic expansion of \eqref{CGLead}, showing that under the Riemann Hypothesis, for $K$ a positive integer, that 
\begin{multline}\label{HLPC}
   \sum_{0< \gamma \leq T} \max_{\gamma \leq t \leq \gamma^+} \left| \zeta \left( \tfrac{1}{2} + it \right) \right|^2  =  \frac{e^2-5}{2} \frac{T}{2\pi} \left( \log \frac{T}{2\pi} \right)^2 +  \left(5-e^2-10 \gamma_0 +2 e^2 \gamma_0 \right) \frac{T}{2\pi}\left( \log \frac{T}{2\pi} \right) \\
+ \frac{T}{2 \pi} \sum_{k=0}^{K} \frac{c_k}{(\log T/2\pi)^k} + O_K\left(\frac{T}{(\log T)^{K+1}}\right)
\end{multline}
where $\gamma_0$ is Euler's constant and the other constants $c_k$ are effectively computable constants.

Recall that the Hardy $Z$-function is defined by 
\[
Z(t) = e^{i \theta (t)} \zeta \left( \tfrac{1}{2} + it \right),
\]
where $\theta (t)$ is the Riemann–Siegel theta function given by
\[
e^{-2 i \theta (t)} = \chi \left( \tfrac{1}{2} + it \right) 
\]
and where $\chi (s)$ is the factor from the functional equation of $\zeta (s)$, given by 
\begin{equation}\label{eq:FE}
\zeta (s) = \chi (s) \zeta (1-s).
\end{equation}
Note that $Z(t)$ is real for real $t$ and satisfies $|Z(t)| = |\zeta (1/2 +it)|$ which means we can rewrite \eqref{CGLead} and \eqref {HLPC} in terms of the Hardy Z-function.

For $\gamma \leq \gamma^+$ consecutive positive ordinates of non-trivial zeros of $\zeta (s)$, we know there is exactly one zero of $Z'(t)$ (under the Riemann Hypothesis), call it $\lambda$, with $\gamma \leq \lambda \leq \gamma^+$. A proof of this result can be found in \cite{IvicBook}. This suggest an unambiguous way of writing the following results, where we are summing over the relative extrema of $|\zeta (s)|$ on the critical line. We will write our sums over these $\lambda$, where $0 < \lambda \leq T$ and $Z'(\lambda)=0$. 

Note that there are two zeros of $Z'(t)$ between $t=0$ and the first positive zero of $Z(t)$. As these additional zeros only introduce an error of $O(1)$, we will ignore them as this contribution is within our larger error terms in the results. 

Specifically, under the Riemann Hypothesis, we can rewrite the result of Conrey and Ghosh \cite{ConGho85} as
    \begin{equation}\label{CGLeadZ}
       \sum_{0< \lambda \leq T} Z(\lambda)^2 \sim \frac{e^2 - 5}{4 \pi} T (\log T)^2
    \end{equation}
as $T\rightarrow \infty$, where we recall that the points $\lambda$ that are being summed over are the zeros of the derivative of the Hardy $Z$-function up to a height $T$.

Finally, we remind the reader that results such as \eqref{CGLead} and \eqref{HLPC} are linked to the joint moments of the Riemann zeta function and the joint moments of the Hardy $Z$-function. To see why this is true, see \cite{ConGho89, Milinovich11}. Specifically, Conrey and Ghosh showed that    
    \begin{equation*}
        \int_{1}^{T} |Z(t)||Z'(t)| \ dt \sim \frac{e^2 - 5}{4 \pi} T (\log T)^2
    \end{equation*} 
as $T \rightarrow \infty$, and so the lower-order terms follows from \eqref{HLPC}. For a more detailed history of this rich problem, we refer the reader to \cite{HugLugPea24} and the references therein. 

\section{Results}\label{sect:Results}
In this paper we combine some of these results, specifically equations \eqref{CGLead} and \eqref{nthderiv}, or rather the full asymptotic expansions given in \eqref{HLPC} and Theorem \ref{thm:HugPC}. We also compare our results with those given in Section~\ref{sect:Background}. In these comparisons we don't give full reasons beyond the obvious (that we are summing over different points/summing different functions), but the differences are worth pointing out explicitly in the remarks that follow the results and we have endeavoured to give some heuristic reasoning. 

The Stieltjes constants will appear in several of the following results. The Laurent expansion for $\zeta (s)$ about $s=1$ can be written in terms of these constants $\gamma_j$, given by
\begin{equation}\label{eq:LaurentZeta}
\zeta(s) = \frac{1}{s-1} + \gamma_0 - \gamma_1 (s-1) + \frac{1}{2!} \gamma_2 (s-1)^2 + \dots + (-1)^j \frac{1}{j!} \gamma_j (s-1)^j + \dots.
\end{equation}

We begin by evaluating the derivatives of the Riemann zeta function over the local extrema of $|\zeta (s)|$. We split the cases of derivatives and no derivatives into two separate results, both for clarity of the results and as the proofs are slightly different. The no derivative case is non-trivial, compared with typical moments where the sums are over non-trivial zeros of the Riemann zeta function.

\begin{theorem}\label{thm:FirstMomnthDerivRho1}
    Assume the Riemann Hypothesis. Write $L~=~\log T/2\pi$. For $\lambda$ defined by $Z'(\lambda)=0$ and $K, n \geq 1$ positive integers, we have 
\begin{equation*}
    \sum_{0< \lambda \leq T} \zeta^{(n)} \left( \frac{1}{2} + i\lambda \right) = a_{n+1} \frac{T}{2\pi}L^{n+1} + \frac{T}{2\pi} \sum_{\ell=0}^{n} a_{n-\ell} L^{n-\ell} + \frac{T}{2\pi} \sum_{m=1}^{K} \frac{b_m}{L^{m}} + O\left(\frac{T}{L^{K+1}} \right),
\end{equation*}
as $T \rightarrow \infty$ where:
\begin{enumerate}
    \item The leading order coefficient is given by
    \[
    a_{n+1} = (-1)^{n} \left( \frac{e^2-2}{n+1} + (-1)^{n+1} \frac{n!}{2^{n+1}} \left(1 - e^2 \sum_{k=0}^{n+1} \frac{(-2)^k}{k!} \right) \right).
    \]
    \item The subleading, non-negative logarithm power coefficients are given by    
    \begin{align*}
        a_{n-\ell} = (-1)^{n}& \sum_{k=1}^{\infty} 2^k \sum_{j=0}^{n} \binom{n}{j} \left[ \frac{c_{\ell+1}^{k,j}}{(k+j-\ell)!} + (k-n+j) \frac{c_{\ell}^{k,j}}{(k+j+1-\ell)!} \right. \\
        &+ \left. (k-n+j) \sum_{m=0}^{\ell-1} (-1)^{\ell -m} (n-m) \dots (n-\ell+1) \frac{c_{m}^{k,j}}{(k+j+1-m)!}  \right] \\
        +(-1)^{n+1} &\sum_{\ell=0}^n \binom{n}{\ell}  (-1)^ {\ell}  \ell!  \left(-1 + \sum_{m=0}^{\ell} \frac{1}{m!}\gamma_m \right),
    \end{align*}
where if $\ell=0$ the summation over $m$ in the square brackets is empty, and where if $\ell=n$, the coefficient of $L^0$, that is to say $a_0$, has an extra contribution of $n! A_n$, where the $A_n$ are the coefficients in the Laurent expansion of $\zeta' (s)/\zeta (s)$ about $s=1$.
    \item The subleading, negative logarithm power coefficients are given by
    \begin{align*}
    b_{m} = (-1)^{n}& \sum_{k=1}^{\infty} 2^k \sum_{j=0}^{n} \binom{n}{j} \beta_{m}^{k,j},
    \end{align*}
    where for $1 \leq m \leq k+j-n$, we have
    \[
    \beta_{m}^{k,j} = \frac{c_{m+n+1}^{k,j}}{(k+j-n-m)!} + \frac{(m-1)!}{k-n+j-1} \sum_{\ell=0}^{m-1}  \binom{k-n+j}{\ell} c_{\ell+n+1}^{k,j}
    \] 
    and where for $m \leq k+j-n+1$, we have
    \[
    \beta_{m}^{k,j} = \frac{(m-1)!}{k-n+j-1} \sum_{\ell=0}^{k+j-n}  \binom{k-n+j}{\ell} c_{\ell+n+1}^{k,j}.
    \]
\end{enumerate}
In these coefficients $a_m, b_m$, the $c_{\ell}^{k,j}$ are the Laurent series coefficients around $s=1$ of
    \[
    \left(\frac{\zeta '}{\zeta} (s) \right)' \left(-\frac{\zeta '}{\zeta} (s) \right)^{k-1} \zeta^{(j)} (s) \frac{1}{s} = \sum_{\ell=0}^\infty c_{\ell}^{k,j} (s-1)^{-k-j-2+\ell}.
    \]
\end{theorem}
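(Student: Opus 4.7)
Following the Cauchy-integral framework of \cite{HugLugPea24} and \cite{HugPC22}, the first step is to realise the summation points $\tfrac12+i\lambda$ as zeros of a single meromorphic function. Differentiating $Z(t)=e^{i\theta(t)}\zeta(\tfrac12+it)$ and using $(\chi'/\chi)(\tfrac12+it)=-2i\theta'(t)$, obtained from \eqref{eq:FE}, shows that $Z'(\lambda)=0$ exactly when
\begin{equation*}
\eta(s):=\zeta'(s)-\frac{1}{2}\frac{\chi'(s)}{\chi(s)}\zeta(s)
\end{equation*}
vanishes at $s=\tfrac12+i\lambda$. Differentiating $\zeta(s)=\chi(s)\zeta(1-s)$ yields both the functional equation $\eta(s)=-\chi(s)\eta(1-s)$ and the expansion $\zeta^{(n)}(s)=\sum_{j=0}^n\binom{n}{j}(-1)^j\chi^{(n-j)}(s)\zeta^{(j)}(1-s)$. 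The argument principle applied to the rectangle $R$ with vertices $c+i$, $c+iT$, $1-c+iT$, $1-c+i$ (with $c=1+1/L$) then gives
\begin{equation*}
\sum_{0<\lambda\le T}\zeta^{(n)}\!\left(\tfrac12+i\lambda\right) = \frac{1}{2\pi i}\oint_R\zeta^{(n)}(s)\frac{\eta'(s)}{\eta(s)}\,ds + O(1),
\end{equation*}
with the horizontal edges negligible by standard convexity and log-derivative bounds, and the $O(1)$ absorbing the two spurious zeros of $Z'$ below the first positive zero of $Z$.

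The left vertical edge is reduced to an integral on $\Re s=c$ by combining the functional equation for $\eta$, the binomial identity for $\zeta^{(n)}$ above, and the change of variables $s\mapsto 1-s$. On $\Re s=c$ the Stirling estimate $(\chi'/\chi)(c+it)=-\log(t/2\pi)+O(1/t)$ is large, while $|(\zeta'/\zeta)(c+it)|$ is bounded, so factorising $\eta=\zeta\cdot(\zeta'/\zeta-\tfrac12\chi'/\chi)$ and expanding the logarithm of the second factor as a convergent series yields
\begin{equation*}
\frac{\eta'(s)}{\eta(s)} = \frac{\zeta'(s)}{\zeta(s)} + \frac{(\chi'/\chi)'(s)}{(\chi'/\chi)(s)} - \sum_{k=1}^{\infty}\frac{1}{k}\frac{d}{ds}\left(\frac{2\zeta'(s)/\zeta(s)}{\chi'(s)/\chi(s)}\right)^{\!k}.
\end{equation*}
The sum over $k$ and the factor $2^k$ appearing in the theorem's coefficients are inherited from this expansion, once $d/ds(y^k)=k\,y^{k-1}y'$ absorbs the $1/k$.

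Inserting this into the two line integrals on $\Re s=c$ and shifting the contours past $s=1$, each pair $(k,j)$ contributes a residue at $s=1$ of the meromorphic function $(\zeta'/\zeta)'(-\zeta'/\zeta)^{k-1}\zeta^{(j)}(s)/s$, whose Laurent coefficients are by definition the $c_\ell^{k,j}$; the factor $1/s$ is what survives from the Stirling expansion of $\chi'/\chi$ once it is written as a function of $s$ rather than $t$. Integrating the remaining $t$-dependence against $\int_{1}^{T}(\log(t/2\pi))^{m}\,dt$ and expanding in powers of $L$ then produces both the $L^{n-\ell}$ part and the $L^{-m}$ part of the asymptotic, while the binomial sum $\sum_{j=0}^{n}\binom{n}{j}$ in each coefficient is inherited directly from the $\zeta^{(n)}$ expansion above.

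The chief technical difficulty will be the combinatorial bookkeeping matching the residue-plus-integration output to the explicit coefficients stated in the theorem. The leading coefficient $a_{n+1}$ should emerge from the $k=1$, $j=n$ contribution together with a closed-form resummation of the form $\sum_{k=0}^{n+1}(-2)^k/k!$, which is what produces the $e^2$ factor in the answer (and should recover the $(e^2-5)/(4\pi)$ of \cite{ConGho85} in the no-derivative analogue). The additional $n!A_n$ contribution to $a_0$ arises exactly when the residue calculus meets the constant Laurent term of $(\zeta'/\zeta)^{(n)}$ at $s=1$, and the piecewise formula for $\beta_m^{k,j}$ records whether the requested Laurent index $m+n+1$ sits below or above the pole order of the defining kernel, which is what decides whether the binomial-sum tail appears. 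Carrying out this bookkeeping through the double sum in $k$ and $j$, while maintaining convergence in $k$ via the factorial denominators inside the $c_\ell^{k,j}$, is the principal obstacle.
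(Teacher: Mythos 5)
Your set-up is correct and essentially the paper's: $\eta$ is the paper's $Z_1$, the Cauchy integral over the rectangle, the reduction to the left edge, and the expansion of $\eta'/\eta$ in powers of $2/(\chi'/\chi)$ all match. (Your factorisation $\eta = \zeta\cdot(\zeta'/\zeta - \tfrac12\chi'/\chi)$ is a slightly tidier way of arriving at the same geometric-series expansion in $1/f(s)$ with $f = -\tfrac12\chi'/\chi$; up to the $O(1/(t\log t))$ error terms coming from $f'$ it gives the same Dirichlet coefficients $a(m,s) = -\Lambda(m) + \sum_k f(s)^{-k}a_k(m)$, $a_k = (\Lambda\log)\ast\Lambda_{k-1}$.)

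The gap is in the middle step, and it is a real one. After substituting the functional equations you are left with integrals of the shape
\[
\frac{1}{2\pi i}\int_{c+i}^{c+iT}\chi(1-s)\left(\log\frac{t}{2\pi}\right)^{r}\left(\sum_m \frac{b_m}{m^s}\right)\,ds,
\]
and you propose to evaluate these by "shifting the contours past $s=1$" and picking up a residue of $(\zeta'/\zeta)'(-\zeta'/\zeta)^{k-1}\zeta^{(j)}(s)/s$. That move is not available: $\chi(1-s)$ grows exponentially to the left of $\Re s = 1/2$, the Dirichlet series on the right need not continue past $\Re s = 1$, and there is no pole of the actual integrand at $s=1$ to cross. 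The paper's route is two-stage: first a stationary-phase lemma (Gonek's lemma, Lemma~\ref{lem:StatPhase}) converts the oscillatory $\chi(1-s)$ integral into a truncated arithmetic sum $\sum_{m\le T/2\pi}b_m(\log m)^r + O(T^{c-1/2}(\log T)^r)$; only then is Perron's formula applied to that sum, and it is the Perron kernel that supplies the $1/s$ in the defining identity for the $c_\ell^{k,j}$, not, as you suggest, a residue of the Stirling expansion of $\chi'/\chi$. A further step your sketch glosses over is the partial summation needed to reinstate the $(\log m_1m_2)^{k-n+j}$ in the denominator before summing over $j$ and $k$; this is where the split between the non-negative powers $a_{n-\ell}$ and the tail $b_m/L^m$, and the piecewise definition of $\beta_m^{k,j}$, actually arises. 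Without the stationary-phase lemma the argument cannot get from the contour integral to the arithmetic sum, so as written the proposal does not close.

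Minor points: the extra $n!A_n$ in $a_0$ comes from the $-\Lambda(m)$ part of $a(m,s)$ via the known expansion of $\sum_{0<\gamma\le T}\zeta^{(n)}(\rho)$ (Theorem~\ref{thm:HugPC}), i.e. from the $I_{2,1}$ piece, rather than from "the constant Laurent term of $(\zeta'/\zeta)^{(n)}$" as you describe; and the leading coefficient comes from summing $c_0^{k,j}/(k+j+1)!$ over all $k\ge 1$ and $0\le j\le n$, not just from $k=1$, $j=n$.
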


An immediate corollary of Theorem \ref{thm:FirstMomnthDerivRho1} follows by setting $n=1$. We state this result explicitly to compare it with \eqref{1stDeriv} and Shanks' Conjecture. 

\begin{corollary}\label{cor:FirstMom1stDerivRho1}
    Assume the Riemann Hypothesis. Write $L~=~\log T/2\pi$. For $\lambda$ defined by $Z'(\lambda)=0$ and $K \geq 1$ a positive integer, we have 
\begin{align*}
   \sum_{0< \lambda \leq T} \zeta' \left( \frac{1}{2} + i\lambda \right)  =  &-\frac{(e^2-3)}{4} \frac{T}{2 \pi} L^2 + \frac{e^2-3+2\gamma_0}{2} \frac{T}{2 \pi} L + \frac{3-e^2(1+2\gamma_0+2 \gamma_1) }{2}\frac{T}{2 \pi} \\
   &+\frac{T}{2 \pi} \sum_{m=1}^{K} \frac{c_m}{L^k} + O_K\left(\frac{T}{L^{K+1}}\right)
\end{align*}
as $T \rightarrow \infty$, where the other constants $c_m$ are special cases of the coefficients $b_m$ from Theorem \ref{thm:FirstMomnthDerivRho1} with $n=1$.
\end{corollary}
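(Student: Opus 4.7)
The plan is to specialise Theorem \ref{thm:FirstMomnthDerivRho1} at $n=1$ and unpack the first three coefficients $a_2$, $a_1$, $a_0$ in closed form; the tail constants $c_m$ in the corollary are inherited directly from the $b_m$ of the theorem with $n=1$ and require no further computation. No new analytic input is needed, only careful bookkeeping of constants.

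I would begin with the leading coefficient. Substituting $n=1$ into
\[
a_{n+1} = (-1)^{n}\left(\frac{e^2-2}{n+1} + (-1)^{n+1}\frac{n!}{2^{n+1}}\left(1 - e^2 \sum_{k=0}^{n+1}\frac{(-2)^k}{k!}\right)\right),
\]
and simplifying $\sum_{k=0}^{2}(-2)^k/k! = 1-2+2 = 1$, the formula collapses to
\[
a_2 = -\left(\frac{e^2-2}{2} + \frac{1-e^2}{4}\right) = -\frac{e^2-3}{4},
\]
matching the coefficient of $L^2$ in the corollary.

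Next I would compute $a_1$, taking $\ell = 0$ and $n=1$ in the formula for $a_{n-\ell}$. The interior sum over $m$ is empty, so only two infinite series in $k$ survive (corresponding to $j=0,1$), together with the finite Stieltjes sum. I would extract the low-order Laurent coefficients $c_0^{k,j}, c_1^{k,j}$ from the definition
\[
\left(\frac{\zeta'}{\zeta}(s)\right)'\left(-\frac{\zeta'}{\zeta}(s)\right)^{k-1}\zeta^{(j)}(s)\frac{1}{s} = \sum_{\ell=0}^{\infty} c_\ell^{k,j}(s-1)^{-k-j-2+\ell},
\]
using \eqref{eq:LaurentZeta} and the standard expansion $\zeta'(s)/\zeta(s) = -1/(s-1) + \sum_{m\geq 0} A_m (s-1)^m$. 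The sums in $k$ should telescope against the factor $2^k$ via identities of the form $\sum_{k\geq 1} 2^k/k! = e^2 - 1$, and combine with the Stieltjes piece to produce the advertised $(e^2-3+2\gamma_0)/2$.

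Finally, for $a_0$ (case $\ell = n = 1$) the interior $m$-sum has one nonempty term at $m=0$, the Stieltjes line contributes its $\ell=0,1$ pieces, and the extra correction $n!A_n = A_1$ must be added. The principal obstacle here is purely combinatorial: tracking how $\gamma_0$, $\gamma_1$, and $A_1$ combine with the $e^2$-producing series in $k$ so that the final constant is exactly $(3 - e^2(1 + 2\gamma_0 + 2\gamma_1))/2$. Once this aggregation is verified, the corollary follows.
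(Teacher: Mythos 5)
Your approach is exactly the paper's: the corollary is stated as an immediate consequence of Theorem~\ref{thm:FirstMomnthDerivRho1} by specialising to $n=1$, and your verification that $\sum_{k=0}^{2}(-2)^k/k!=1$ collapses $a_2$ to $-(e^2-3)/4$ is correct. The same bookkeeping with $c_0^{k,j}=(-1)^jj!$ and $c_1^{k,j}$ from the remark after Lemma~\ref{lem:Ak}, together with telescoping of the $2^k/k!$ sums against $e^2$, does produce $a_1=(e^2-3+2\gamma_0)/2$ and (after adding the Stieltjes term and $1!\,A_1$) $a_0=(3-e^2(1+2\gamma_0+2\gamma_1))/2$, so your plan goes through as written.
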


We may also prove the analogous result of Conrey and Ghosh in \eqref{CGLead} from \cite{ConGho85}, with the modulus squared removed.

\begin{theorem}\label{thm:FirstMomRho1}
    Assume the Riemann Hypothesis. Write $L~=~\log T/2\pi$. For $\lambda$ defined by $Z'(\lambda)=0$ and $K \geq 1$ a positive integer, we have 
\begin{equation*}
    \sum_{0< \lambda \leq T} \zeta \left( \frac{1}{2} + i\lambda \right)  = \frac{e^2-3}{2} \frac{T}{2 \pi} L + \frac{3-e^2-4\gamma_0}{2} \frac{T}{2 \pi} + \frac{T}{2 \pi} \sum_{m=1}^{K} \frac{d_m}{L^m} + O_K\left(\frac{T}{L^{K+1}}\right)
\end{equation*}
as $T \rightarrow \infty$, where 
\[
d_m = \sum_{k=m}^{\infty} 2^k \frac{c_{k,m+1}}{(k-m)!} + (m-1)! \sum_{k=1}^{\infty} \frac{ 2^k k}{(k-1)!} \sum_{\ell=0}^{\min\{m-1, k\}} \binom{k}{\ell} c_{k,\ell+1} ,
\]
and where the $c_{k,\ell}$ are the Laurent series coefficients around $s=1$ of
    \[
    \left(\frac{\zeta '}{\zeta} (s) \right)' \left(-\frac{\zeta '}{\zeta} (s) \right)^{k-1} \zeta (s) \frac{1}{s} = \sum_{\ell=0}^\infty c_{k, \ell} (s-1)^{-k-2+\ell} .
    \]
\end{theorem}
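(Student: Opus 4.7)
The strategy mirrors the proofs of Theorem~\ref{thm:FirstMomnthDerivRho1} and of the full asymptotic \eqref{HLPC} in \cite{HugLugPea24}, specialised to $n=0$. First I realise the sum as a contour integral. Define $G(s):=\zeta'(s)-\tfrac{\chi'(s)}{2\chi(s)}\zeta(s)$; from $Z(t)=\chi(1/2+it)^{-1/2}\zeta(1/2+it)$ one checks that $G(1/2+it)=-ie^{-i\theta(t)}Z'(t)$, so under the Riemann Hypothesis the zeros of $G$ in the upper half of the critical strip lie on the critical line and coincide with the points $1/2+i\lambda$. Taking $\mathcal{R}$ to be the positively oriented rectangle with vertices $c+i,\,c+iT,\,1-c+iT,\,1-c+i$ for some $c>1$ (say $c=5/4$), with $T$ chosen to avoid zeros of $G$, the argument principle yields
\[
S\,:=\,\sum_{0<\lambda\le T}\zeta\bigl(\tfrac{1}{2}+i\lambda\bigr)\,=\,\frac{1}{2\pi i}\oint_{\mathcal{R}}\frac{G'(s)}{G(s)}\,\zeta(s)\,ds.
\]

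Next, factor $G=\zeta\cdot H$ with $H:=\zeta'/\zeta-\chi'/(2\chi)$, so that $G'/G=\zeta'/\zeta+H'/H$. The piece $\zeta'/\zeta\cdot\zeta=\zeta'$ integrates to zero over $\mathcal{R}$ because $s=1$ is exterior to $\mathcal{R}$ and $\zeta'$ has residue $0$ at its unique pole. Writing $H=-\tfrac{\chi'}{2\chi}\bigl(1-\tfrac{2\chi}{\chi'}\cdot\tfrac{\zeta'}{\zeta}\bigr)$ and expanding as a geometric series,
\[
\frac{H'(s)}{H(s)}\,\zeta(s)\,=\,-\sum_{k=0}^{\infty}\Bigl[\Bigl(\tfrac{\zeta'}{\zeta}\Bigr)'(s)-\Bigl(\tfrac{\chi'}{2\chi}\Bigr)'(s)\Bigr]\Bigl(\tfrac{2\chi(s)}{\chi'(s)}\Bigr)^{k+1}\Bigl(\tfrac{\zeta'(s)}{\zeta(s)}\Bigr)^{k}\zeta(s).
\]
Integrating term by term and using the functional equation $\zeta(s)=\chi(s)\zeta(1-s)$ to fold the left vertical side of $\mathcal{R}$ onto the right, each term reduces to a residue calculation at $s=1$, after bounding the horizontal pieces via log-free zero density estimates and standard mean-value bounds for $\zeta'/\zeta$ in the critical strip.

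To extract the residues I use $\chi(s)\sim-2/(s-1)$ and $\chi'/\chi(s)\sim-1/(s-1)$ near $s=1$, so that $(2\chi/\chi')^{k+1}\sim(-2)^{k+1}(s-1)^{k+1}$ locally, and rewrite $(\zeta'/\zeta)^{k}=-(-\zeta'/\zeta)\cdot(-\zeta'/\zeta)^{k-1}$. Combined with the Perron/Gonek-style kernel $(T/2\pi)^{s}/s$ that accompanies the contour manipulation (whose $1/s$ factor matches the $1/s$ in the theorem's definition of $c_{k,\ell}$), the residue at $s=1$ picks out the Laurent coefficients of $(\zeta'/\zeta)'(-\zeta'/\zeta)^{k-1}\zeta(s)/s$, with the $2^k$ coming from $(2\chi/\chi')^{k+1}$ and the factorials $(k-m)!$ coming from the Taylor expansion of $(T/2\pi)^{s}=e^{sL}$ about $s=1$. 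The two pieces of $H'=\bigl(\tfrac{\zeta'}{\zeta}\bigr)'-\bigl(\tfrac{\chi'}{2\chi}\bigr)'$ produce the two sums in the formula for $d_m$: the first gives the $\sum_{k\ge m}2^{k}c_{k,m+1}/(k-m)!$ piece, and the second -- after using $(\chi'/(2\chi))'\sim 1/(2(s-1)^{2})$ locally to shift the pole structure by one and combining with the Taylor coefficients of $e^{sL}/s$ -- produces the $(m-1)!\sum_{k}\tfrac{2^{k}k}{(k-1)!}\sum_{\ell}\binom{k}{\ell}c_{k,\ell+1}$ piece. The chief technical obstacle is to show, uniformly in $k$, that the horizontal-side contributions and the tail of the geometric series expansion of $1/H$ are absorbed into the claimed error $O_K(T/L^{K+1})$; this requires the same admissibility of $T$ (so that $|G|$ is bounded away from zero on $\mathcal{R}$) and the same bounds on $\zeta'/\zeta$ in the critical strip as in \cite{HugLugPea24, HugPC22}.
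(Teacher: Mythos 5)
Your contour-integral framework is the same as the paper's: $G$ is exactly $Z_1$, the rectangle $\mathcal{R}$ matches, and the geometric-series expansion of $1/H$ in powers of $(\chi'/\chi)^{-1}\,\zeta'/\zeta$ is equivalent to the paper's Dirichlet-series expansion of $Z_1'/Z_1$ in Lemma~\ref{lem:Dirichletlogderiv}. Your upfront observation that $\oint_{\mathcal{R}}\zeta'(s)\,ds=0$ (from the split $G'/G=\zeta'/\zeta+H'/H$) is a clean shortcut: in the paper's bookkeeping this same cancellation appears only at the end, when $I_1$ and $I_{2,1}$ annihilate each other. That part is a genuine, if minor, simplification.

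However, the mechanism you describe for producing the main terms is incorrect in two substantive ways. First, after folding the left side to $\Re s=c>1$ via the functional equation, the paper does not evaluate a residue on the original contour; it applies the Gonek/stationary-phase lemma (Lemma~\ref{lem:StatPhase}) to convert $\int\chi(1-s)\,(\log\tfrac{t}{2\pi})^{a}\sum b_m m^{-s}\,dt$ into the finite sum $\sum_{m\le T/2\pi}b_m(\log m)^a$, then applies Perron's formula to that discrete sum (this is where the kernel $x^s/s$ and the residue at $s=1$ actually appear), and finally applies partial summation to reinsert the $(\log mn)^{-k}$ factor. Your attempt to short-circuit this by Laurent-expanding $\chi(s)\sim -2/(s-1)$ and $\chi'/\chi(s)\sim -1/(s-1)$ near $s=1$ is a category error: the factor $2^k$ comes from $f(s)^{-k}\approx(2/\log\tfrac{t}{2\pi})^k$ evaluated at $s=c+it$ for \emph{large} $t$ on the right vertical line, not from a local expansion of $\chi$ at $s=1$; and in the Perron residue, the Laurent expansion that actually occurs involves only $(\zeta'/\zeta)'(-\zeta'/\zeta)^{k-1}\zeta(s)/s$, with no $\chi$ factors present at all. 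Second, your attribution of the two sums in $d_m$ to the two pieces of $H'=(\zeta'/\zeta)'-(\chi'/(2\chi))'$ is wrong: the $(\chi'/(2\chi))'=f'$ piece contributes $O(1/(t\log t))$ to $Z_1'/Z_1$ and is absorbed entirely into the error term (as the paper notes after~\eqref{eq:logderivZ1}). The two sums in $d_m$ are instead the boundary term and the integral term of the partial summation applied to $\sum_{nm\le T/2\pi}a_k(n)(\log nm)^{-k}$, combined with the expansion $\int_2^{T/2\pi}(\log x)^{-m}\,dx=\tfrac{T}{2\pi}\sum_{r\ge m}\tfrac{(r-1)!}{(m-1)!}L^{-r}+O(T/L^{M+1})$. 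Without explicitly invoking stationary phase, Perron, and partial summation in that order, you cannot produce the stated coefficients $d_m$, and your sketch as written would not.
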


We evaluate the sum of the factor $\chi (s)$ from the functional equation \eqref{eq:FE} for $\zeta (s)$ over the local extrema of $\zeta (s)$. 
\begin{theorem}\label{thm:ChiRho1}
    Assume the Riemann Hypothesis. Write $L~=~\log T/2\pi$. For $\lambda$ defined by $Z'(\lambda)=0$ and $K \geq 1$ a positive integer, we have 
\begin{equation*}
    \sum_{0< \lambda \leq T} \chi \left( \frac{1}{2} + i\lambda \right)  = (e^2-2) \frac{T}{2 \pi} - 4e^2\gamma_0 \frac{T}{2 \pi} \frac{1}{L} + \frac{T}{2 \pi} \sum_{m=2}^{K} \frac{e_m}{L^m} + O_K\left(\frac{T}{L^{K+1}}\right)
\end{equation*}
as $T \rightarrow \infty$, where 
    \[
    e_m = \sum_{k=m}^{\infty} 2^k \frac{c_{k,m}}{(k-m)!} + (m-1)! \sum_{k=1}^{\infty} \frac{ 2^k }{(k-1)!} \sum_{\ell=0}^{\min\{m-1, k\}} \binom{k}{\ell} c_{k,\ell} ,
    \]
and where the $c_{k,\ell}$ are the Laurent series coefficients around $s=1$ of
    \[
    \left(\frac{\zeta '}{\zeta} (s) \right)' \left(-\frac{\zeta '}{\zeta} (s) \right)^{k-1} \frac{1}{s} = \sum_{\ell=0}^\infty c_{k,\ell} (s-1)^{-k-1+\ell} .
    \]
\end{theorem}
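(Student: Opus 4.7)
My approach is to adapt the contour-integration method used (in the proofs that will follow this statement) for Theorems~\ref{thm:FirstMomnthDerivRho1} and~\ref{thm:FirstMomRho1}, itself derived from \cite{ConGho85,HugLugPea24}, now with the simpler integrand $\chi(s)$ in place of $\zeta^{(n)}(s)$ or $\zeta(s)$. The identity $\theta'(t)=-\tfrac{1}{2}(\chi'/\chi)(1/2+it)$ on the critical line shows that the zeros of $Z'$ are exactly the critical-line zeros of
\[ K(s)\ :=\ 2\zeta'(s)\ -\ \frac{\chi'(s)}{\chi(s)}\zeta(s), \]
which has a double pole at $s=1$ and a simple pole at $s=0$ (both computable from the Laurent expansions of $\zeta$ and $\chi$ there). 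By the argument principle,
\[ \sum_{0<\lambda\le T}\chi\!\left(\tfrac{1}{2}+i\lambda\right)\ =\ \frac{1}{2\pi i}\oint_{\mathcal R}\chi(s)\frac{K'(s)}{K(s)}\,ds\ -\ \Res_{s=1}\!\left[\chi(s)\frac{K'(s)}{K(s)}\right], \]
where $\mathcal R$ is a rectangle with vertices $c\pm iT_0$, $c+iT$, and $(1-c)+iT$ with $c=1+1/L$, and under the assumption (as in the previous theorems) that $K$ has no zeros off the critical line inside $\mathcal R$. The residue at $s=1$ is computed directly from the local Laurent expansions and contributes a constant term which is absorbed into the lower-order coefficients of the asymptotic.

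\textbf{Main evaluation.} On the right edge $\operatorname{Re}(s)=c$, writing $\phi=\zeta'/\zeta$ and $\psi=\chi'/\chi$, I factor $K=-\psi\zeta(1-2\phi/\psi)$, which is valid because $|\psi|\sim\log|s|$ dominates $|\phi|=O(1)$ there. Expanding $\log(1-2\phi/\psi)=-\sum_{k\geq 1}(2\phi/\psi)^k/k$ and differentiating yields a convergent series expansion of $\chi\cdot K'/K$, whose terms I integrate separately. Each term, after shifting the contour leftward past $s=1$ and collecting the residue there, contributes a quantity controlled by the Laurent coefficients $c_{k,\ell}$ of $(\zeta'/\zeta)'(-\zeta'/\zeta)^{k-1}/s$ as defined in the theorem; the $1/s$ factor arises from the Stirling-type asymptotic of $\chi$, and the pattern of cancellations emerges because $\psi(s)\sim -1/(s-1)$ near $s=1$, so $1/\psi(s)^k\sim(-(s-1))^k$ cancels the simple pole of $\chi$. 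The left edge is handled via the symmetries $\chi(1-s)=1/\chi(s)$ and $K(1-s)=-K(s)/\chi(s)$, and the horizontal edges at heights $T_0$ and $T$ are negligible by the bounds of \cite{HugLugPea24}.

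\textbf{Assembly and main obstacle.} Summing across $k$, the combinatorial sums $\sum_{k\ge m}2^k/(k-m)!=2^m e^2$ emerge naturally, producing the $e^2$ in the leading coefficient $(e^2-2)$, the subleading coefficient $-4e^2\gamma_0$ (the $\gamma_0$ from the expansion of $\zeta'/\zeta$ at $s=1$), and the general coefficients $e_m$ for $m\ge 2$ in the stated form. The main technical obstacle is justifying the interchange of the infinite series with the contour integral, and establishing the uniform tail bound that produces the power-saving error $O_K(T/L^{K+1})$; this requires careful repetition of the estimates on $\chi$, $\zeta'/\zeta$, and $(\chi'/\chi)^{-1}$ developed in \cite{HugLugPea24}, now simplified by the absence of a $\zeta$ factor in our integrand relative to Theorems~\ref{thm:FirstMomnthDerivRho1} and~\ref{thm:FirstMomRho1}.
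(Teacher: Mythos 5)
Your choice of auxiliary function $K(s)=2Z_1(s)$ and the factorization $K=-\psi\zeta(1-2\phi/\psi)$ match the spirit of the paper (the paper uses $Z_1=\zeta'+f\zeta$ with $f=-\psi/2$ and a geometric-series expansion of $Z_1'/Z_1$ in powers of $\phi/f$; see Lemma~\ref{lem:Dirichletlogderiv}), and your combinatorial endgame $\sum_{k\geq m}2^k/(k-m)!=2^m e^2$ is exactly where the $e^2$ in $(e^2-2)$ comes from. However, there is a genuine gap at the heart of your main evaluation. You propose to evaluate each term of the expanded integrand by ``shifting the contour leftward past $s=1$ and collecting the residue there.'' This does not address the $\chi$ factor that actually carries the main term. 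On the right edge $\Re(s)=c>1$ the factor $\chi(s)\ll T^{1/2-c}$ is tiny, so that whole edge is $O(T^{1/2+\varepsilon})$; the main term lives on the left edge, which after the reflection $s\mapsto 1-s$ and conjugation becomes an integral over $\Re(s)=c$ of $\chi(1-s)\frac{Z_1'}{Z_1}(1-s)$. Here $\chi(1-s)$ \emph{grows} like $|t|^{c-1/2}$, and the conversion of $\frac{1}{2\pi i}\int_{c+i}^{c+iT}\chi(1-s)\left(\log\tfrac{t}{2\pi}\right)^{-k}\sum_m b_m m^{-s}\,ds$ into $\sum_{m\leq T/2\pi}b_m(\log m)^{-k}$ is a stationary-phase statement (Lemma~\ref{lem:StatPhase}, due to Gonek and Karabulut--Y{\i}ld{\i}r{\i}m), not a residue computation. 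There is no pole to shift past that produces the $T/2\pi$ truncation; the main term arises from the oscillation of $\chi(1-c-it)$ against $m^{-it}$. The paper applies Perron's formula (and hence residues at $s=1$, via Lemma~\ref{lem:Ak3}) only \emph{after} the stationary-phase step has turned the integral into a sum over integers $n\leq T/2\pi$. Your sketch collapses these two distinct steps into one, and the intermediate step cannot be bypassed by a contour shift.

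A secondary issue: your residue identity subtracts $\Res_{s=1}[\chi K'/K]$, which presupposes that your rectangle $\mathcal R$ encloses $s=1$. The paper sidesteps this entirely by choosing the contour $\mathcal C$ with corners at heights $1$ and $T$, so that $s=1$ (and $s=0$, where as you note $K$ also has a pole) lies strictly outside; the finitely many zeros of $Z_1$ near the real axis are absorbed into an $O(1)$ correction. If you do enclose $s=1$, you must also account for $s=0$ and for the trivial zeros of $\zeta$ that begin contributing to $K$, and you must verify which of these actually produce nonzero residues given that $\chi(0)=0$; none of this is addressed. The cleanest fix is simply to adopt the paper's contour from the outset and then insert the stationary-phase lemma where your sketch currently invokes a contour shift.
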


\begin{remark}
    The infinite descending chain of powers of $\log T$ in the previous results where each function is summed over the relative extrema of zeta is due to a pole in the function that we consider near $1$, given by 
    \[
    \beta_Z =1+2 / L+O\left(L^{-2}\right),
    \]
    where $L=\log T / 2 \pi$. 
    
    We expect that we would be able to give these results with a power-saving error term of $O \left(T^{1/2+\varepsilon} \right)$ for all $\varepsilon>0$ if we were able to find a closed form for the asymptotic as a function of $\beta_Z$. 
    
    As in \cite{HugLugPea24}, we have set aside the search for such a form for the time being.
\end{remark}

With only minor adjustments to our proof of Theorem \ref{thm:ChiRho1} (which we will specify in Section \ref{sect:ProofThmChi}), we are also able to prove the following result. It is a special case of a result from \cite{KaraYil11}, but here we give it with a power-saving error term. 

\begin{theorem}\label{thm:ChiRho}
    Assume the Riemann Hypothesis. For $\rho = 1/2 + i\gamma$ a non-trivial zero of $\zeta (s)$, we have 
\begin{equation*}
   \sum_{0< \gamma \leq T} \chi \left( \frac{1}{2} + i\gamma \right)  = - \frac{T}{2 \pi} + O\left( T^{1/2 +\varepsilon} \right)
\end{equation*}
as $T \rightarrow \infty$, where $\chi (s)$ is the factor from the functional equation for $\zeta (s)$ given in \eqref{eq:FE}.
\end{theorem}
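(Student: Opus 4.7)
The plan is to mirror the contour-integration approach used for Theorem \ref{thm:ChiRho1}. By Cauchy's residue theorem applied to a rectangle $\mathcal{C}$ with vertices $c \pm iT$ and $(1-c) \pm iT$ (with $c = 3/2$, and $T$ perturbed by $O(1)$ to avoid ordinates of zeros),
\[
\sum_{0 < \gamma \leq T} \chi(\rho) = \frac{1}{2\pi i} \oint_{\mathcal{C}} \chi(s) \frac{\zeta'}{\zeta}(s)\, ds - \Res_{s=1} \left( \chi(s) \frac{\zeta'}{\zeta}(s) \right).
\]
A Laurent expansion gives $\chi(s) = -2/(s-1) - 2(\gamma_0 + \log 2\pi) + O(s-1)$ and $\zeta'/\zeta(s) = -1/(s-1) + \gamma_0 + O(s-1)$ near $s=1$, whence the residue equals $2\log(2\pi) = O(1)$, absorbed into the error term.

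The right vertical side is $O(\log T)$ (using the Dirichlet series and $|\chi(3/2+it)| \ll t^{-1}$) and the bottom is $O(1)$. On the left side ($\sigma = -1/2$) I apply the functional-equation identity $\chi(s)\zeta'/\zeta(s) = \chi'(s) - \chi(s)\zeta'/\zeta(1-s)$. The $\chi'(s)$ piece integrates to boundary data whose leading part $\chi(-1/2+iT)$, of size $\sim T/(2\pi)$, is exactly cancelled by the analogous boundary contribution produced by $\int_{-1/2}^{3/2}\chi'(\sigma+iT)\,d\sigma$ on the top side. For the $-\chi(s)\zeta'/\zeta(1-s)$ piece, since the shifted argument has real part $3/2 > 1$, I insert the Dirichlet series $\zeta'/\zeta(1-s) = -\sum_n \Lambda(n)\,n^{s-1}$ and reduce matters to $\int_1^T \chi(-1/2+it)\,n^{it}\,dt$ for each prime power $n$. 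Using the Stirling-type asymptotic $\chi(-1/2+it) \sim (t/2\pi)\,e^{i(t + \pi/4 - t\log(t/2\pi))}$, this integral has a unique stationary point at $t = 2\pi n$ and contributes $\sim 2\pi n^{3/2}$ when $n \leq T/(2\pi)$ (and is negligible otherwise by integration by parts). Summing over $n$ with the weight $-\Lambda(n)\,n^{-3/2}/(2\pi)$ produces the main term $-\psi(T/(2\pi)) = -T/(2\pi) + O(T^{1/2}\log^2 T)$ under RH.

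The principal obstacle will be controlling the remaining top-side integral $\int_{-1/2}^{3/2} \chi(\sigma+iT)\,\zeta'/\zeta(1-\sigma-iT)\,d\sigma$, which a crude estimate bounds only by $O(T/\log T)$. I would split the range, expanding the Dirichlet series for $\zeta'/\zeta(1-\sigma-iT)$ on $\sigma < 0$ (and using RH-style bounds $|\zeta'/\zeta(\sigma+iT)| \ll (\log T)^2$ elsewhere), and exhibit additional cancellation in the resulting $\sigma$-integrals of $\chi(\sigma+iT)\,n^\sigma$. The critical window $n \asymp T/(2\pi)$ should be handled separately via stationary phase in $\sigma$. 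Once these estimates yield $O(T^{1/2+\varepsilon})$, assembling everything gives the claimed asymptotic.
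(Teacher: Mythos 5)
Your overall strategy (Cauchy's theorem, push to the right with the functional equation, stationary phase, then the Prime Number Theorem) is the same as the paper's, and your stationary-phase heuristic correctly identifies the saddle at $t = 2\pi n$ and recovers the main term $-\psi(T/2\pi)$. The gap you flag at the end is real, however, and it is created by a choice you made at the start: by taking $c = 3/2$ rather than $c = 1 + 1/\log T$, the horizontal sides of the contour reach to $\sigma = -1/2$, where $\chi(\sigma + iT) \asymp T^{1/2-\sigma}$ is of size $T$. With $\zeta'/\zeta(\sigma+iT) \ll \log T$ (on the line $\sigma = -1/2$, via the functional equation) the top-side integral is only $O(T\log T)$ trivially, and extracting the needed cancellation down to $O(T^{1/2+\varepsilon})$ by a $\sigma$-stationary-phase argument is genuinely hard and is not carried out. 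The paper sidesteps this entirely: keeping $c = 1 + 1/\log T$ means the horizontal sides sit inside $-1/\log T \lesssim \sigma \lesssim 1 + 1/\log T$, where $\chi(\sigma+iT) \ll T^{1/2 + O(1/\log T)} \ll T^{1/2}$, so the bound $\zeta'/\zeta(\sigma + iT) \ll (\log T)^2$ (valid for $T$ kept $\gg 1/\log T$ from ordinates of zeros, under RH) gives $S^T = O(T^{1/2+\varepsilon})$ with no extra work. The same narrow-contour choice also lets the paper invoke the packaged stationary-phase result (Lemma \ref{lem:StatPhase}) directly on the left side, rather than re-deriving the saddle-point estimate term by term as you do.

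Two smaller issues. First, your contour has vertices $c \pm iT$ and $(1-c) \pm iT$, so it encircles zeros with $|\gamma| < T$ and the pole at $s=1$; since $\chi(\bar\rho) = \overline{\chi(\rho)}$, that integral equals $2\,\Re\sum_{0 < \gamma < T} \chi(\rho)$ plus the residue at $s=1$, not $\sum_{0 < \gamma \leq T}\chi(\rho)$. The paper avoids this by running the contour from height $1$ to height $T$ only (picking up no pole and exactly the zeros in $(0,T]$). Second, the claimed cancellation of the $\chi'(s)$ piece on the left side against a ``boundary contribution'' on the top side is not a like-for-like cancellation: the top side carries $\chi\,\zeta'/\zeta$, not $\chi'$, so you would still have to isolate and control the surviving pieces. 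In the paper this whole term becomes $I_1$, which vanishes to main order by Lemma \ref{lem:StatPhase} applied with trivial Dirichlet coefficients ($b_1 = 1$, $b_m = 0$ for $m \geq 2$), giving a contribution of $(\log 1)^k = 0$ plus the error $O(T^{c-1/2}\log T) = O(T^{1/2+\varepsilon})$. To repair your write-up, the cleanest fix is to move $c$ to $1 + 1/\log T$, run the contour only from height $1$ up, and quote the stationary-phase lemma; the rest of your outline then goes through without the unresolved top-side estimate.
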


\subsection{Remarks on the previous results}
%\textbf{Remarks about the previous results} \hfill
    \begin{enumerate}
        \item \textbf{Theorem \ref{thm:FirstMomnthDerivRho1}} \hfill \\
        We compare the result from this theorem with that of Kaptan, Karabulut and Y{\i}ld{\i}r{\i}m \cite{KKY11} and Hughes and Pearce-Crump \cite{HugPC22} on the Generalised Shank's conjecture. In that result, the average of $\zeta^{(n)} (s)$, summed over the non-trivial zeros of $\zeta (s)$, is clearly real and alternates between positive and negative as $n$ increases. 
        
        In the case of Theorem \ref{thm:FirstMomnthDerivRho1}, the behaviour is reversed; the average of $\zeta^{(n)} (s)$, summed over the maxima of $|\zeta (s)|$, is real and alternates between negative and positive as $n$ increases.

        It isn't immediately obvious from the leading order coefficient that this is the case, that is, that
        \[
        (-1)^n \left (\frac{e^2-2}{n+1} + (-1)^{n+1} \frac{n!}{2^{n+1}} \left(1 - e^2 \sum_{k=0}^{n+1} \frac{(-2)^k}{k!} \right)\right)
        \]
        alternates. Begin by noting by Taylor's theorem with remainder that 
        \[
        e^{-2} = \sum_{k=0}^{n+1} \frac{(-2)^k}{k!} + O \left( \frac{1}{(n+2)!} \right)
        \]
        so the coefficient of the leading order can be rewritten as
        \[
        (-1)^n \left (\frac{e^2-2}{n+1} + (-1)^{n+1} \frac{n!}{2^{n+1}} \left(1 - e^2 \left\{ e^{-2} +  O \left( \frac{1}{(n+2)!} \right) \right\} \right)\right)
        \]
        and simplifying gives
        \[
        (-1)^n \left (\frac{e^2-2}{n+1} + O \left( \frac{1}{n^2} \right) \right).
        \]
        Since $e^2-2 > 0$, the alternating behaviour is then clear, but in reverse to the Generalised Shanks' Conjecture as $n$ increases.

        \item \textbf{Corollary \ref{cor:FirstMom1stDerivRho1}} \hfill \\
        As noted in the general $n$\textsuperscript{th} derivative case, while Shanks' conjecture is that $\zeta'(\rho)$ is positive and real on average, the same question with regards to $\zeta ' (\lambda)$ is negative and real on average. 
        
        We plot the first few terms of the asymptotic against the real part of the true value of the sum in Figure \ref{fig:plotZetaPrimeLambdaAllGraphs}. If we were to plot the imaginary part of the true value, we would find that it remains small (at most $\approx \pm 10$ in absolute value).

        \begin{figure}[ht] 
        \begin{center}
        \includegraphics[height=5.8cm]{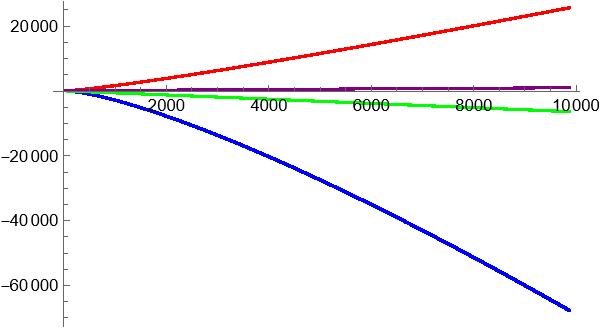}
        \end{center}
        \caption{The real part of the sum $\sum_{0 < \lambda \leq T} \zeta ' (1/2+ i\lambda)$ with true value (blue), the true - leading (red), true - leading - subleading (green) and true - leading - subleading - subsubleading (purple) terms subtracted from the true value, for $T$ up to the height of the 10,000\textsuperscript{th} zero $\lambda$ of $Z'(t)$.}\label{fig:plotZetaPrimeLambdaAllGraphs}
        \end{figure}

        \item \textbf{Theorem \ref{thm:FirstMomRho1}} \hfill \\
        The result of Conrey and Ghosh in \eqref{CGLead} has coefficient $(e^2-5)/2$ to leading order, with an extra power of a logarithm, compared with the $(e^2-3)/2$ here. The power of the logarithm can be explained by noting that we are taking a first moment in Theorem \ref{thm:FirstMomRho1}. 
        
        The numerical value of the coefficient in the first moment case is almost double that of the second moment. Curiously, the numerical value of the first subleading coefficient in \eqref{HLPC} and in Theorem \ref{thm:FirstMomRho1} are very similar in size (agreeing to the second decimal point).

        We plot the first few terms of the asymptotic against the real part of the true value of the sum in Figure \ref{fig:plotZetaLambdaAllGraphs}. If we were to plot the imaginary part of the true value, we would find that it remains small (at most $\approx \pm 20$ in absolute value).

        \begin{figure}[ht] 
        \begin{center}
        \includegraphics[height=5.8cm]{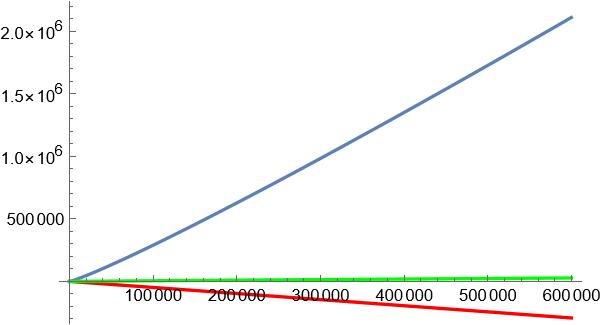}
        \end{center}
        \caption{The real part of the sum $\sum_{0 < \lambda \leq T} \zeta (1/2+ i\lambda)$ with true value (blue), the true - leading (red) and true - leading - subleading (green) terms subtracted from the true value, for $T$ up to the height of the 1,000,000\textsuperscript{th} zero $\lambda$ of $Z'(t)$.}\label{fig:plotZetaLambdaAllGraphs}
        \end{figure}

        \item \textbf{Theorem \ref{thm:ChiRho1}} \hfill \\
        We plot the first few terms of the asymptotic against the real part of the true value of the sum in Figure \ref{fig:plotChiLambdaAllGraphs}. If we were to plot the imaginary part of the true value, we would find that it remains small (at most $\approx \pm 10$ in absolute value).

        We note that the sign of the leading order term of this asymptotic is opposite that found in Theorem \ref{thm:ChiRho} and in \cite{KaraYil11}. Numerically it is also considerably larger.

        \begin{figure}[ht] 
        \begin{center}
        \includegraphics[height=5.8cm]{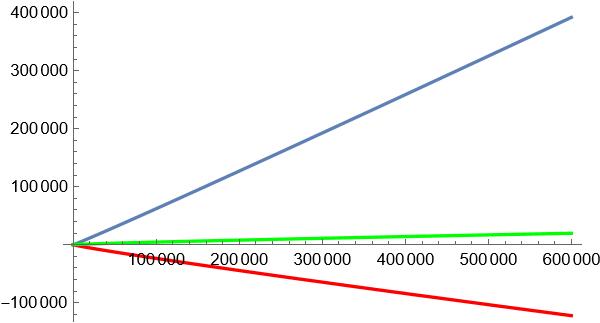}
        \end{center}
        \caption{The real part of the sum $\sum_{0 < \lambda \leq T} \chi (1/2+ i\lambda)$ with true value (blue), the true - leading (red) and true - leading - subleading (green) terms subtracted from the true value, for $T$ up to the height of the 1,000,000\textsuperscript{th} zero $\lambda$ of $Z'(t)$.}\label{fig:plotChiLambdaAllGraphs}
        \end{figure}

        \item \textbf{Theorem \ref{thm:ChiRho}} \hfill \\
        We assume the Riemann Hypothesis in proving this result but can make it unconditional by taking better care when calculating the error term.

        The behaviour is somewhat strange, however. Recall that the Gram points occur when $\theta (g_n) = n\pi$, and by the definition of the Riemann--Siegel function, we have
        \[
        \chi \left( \frac{1}{2} + ig_n \right) = e^{-2 i \theta (g_n)} = e^{-2 i n \pi} = 1,
        \]
        so 
        \[
        \sum_{0 < \gamma \leq T} \chi \left( \frac{1}{2} + ig_n \right) = \frac{T}{2 \pi} \log T + O(T). 
        \]
        Given that a zero is expected to lie roughly in the middle of a Gram interval, it isn't overly surprising that the sum in Theorem \ref{thm:ChiRho} is negative, but this doesn't account for the loss of the logarithm. Likewise, it then isn't overly surprising that the sum in Theorem \ref{thm:FirstMomRho1} over $\lambda$ is positive as we should expect these points to lie closer to the Gram points, but again doesn't account for the loss of the logarithm. 

        We plot the true value of the sum minus the main term of the asymptotic in Figure \ref{fig:plotChiZerosError}. As we can see, all that remains is clearly an error term. We have plotted both the real and imaginary parts as both are small and can be easily seen against each other, unlike in the previous figures where the imaginary parts are too small to be seen.

        \begin{figure}[ht] 
        \begin{center}
        \includegraphics[height=5.8cm]{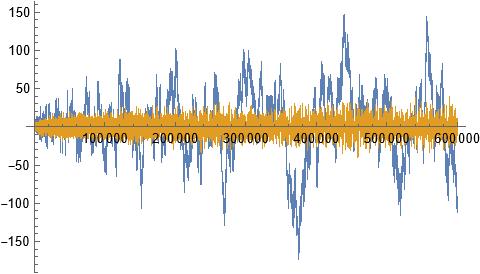}
        \end{center}
        \caption{The real (blue) and imaginary (orange) part of $\sum_{0 < \gamma \leq T} \chi(1/2+i\gamma) + \frac{T}{2 \pi}$ for $T$ up to the height of the 1,000,000\textsuperscript{th} zero $1/2+i\gamma$ of $\zeta (s)$.}\label{fig:plotChiZerosError}
        \end{figure}     
    \end{enumerate}

\subsection{Overview of the proofs}\label{sect:Overview}
In Section \ref{sect:PrelimLem}, we consider properties of the auxiliary function
\[
Z_1(s)=\zeta^{\prime}(s)-\frac{1}{2} \frac{\chi^{\prime}}{\chi}(s) \zeta(s)
\]
used in the proof, where $\chi(s)$ is the factor from the non-symmetrical form of the functional equation of the Riemann zeta function, given in \eqref{eq:FE}. This function is zero at the extrema of the Riemann zeta function. We also list other useful lemmas in this section, including functional equations, stationary phase methods, and a moments result from \cite{HugPC22} on the average of derivatives of $\zeta (s)$, evaluated over the non-trivial zeros of $\zeta (s)$.

In Section \ref{sect:ProofThmDeriv}, we prove Theorem \ref{thm:FirstMomnthDerivRho1}. This involves using Cauchy's Theorem to write the sum over the extrema as an integral  
\begin{equation*}
\sum_{0< \lambda \leq T} \zeta^{(n)} \left( \frac{1}{2} + i\lambda \right)  = \frac{1}{2\pi i} \int_{\mathcal{C}} \frac{Z_1'}{Z_1}(s) \zeta^{(n)} (s) \ ds
\end{equation*}
(up to a small error), where $\mathcal{C}$ is a contour we specify later that encloses the critical line up to a height $T$. 

We begin by showing that only the left-hand side of the contour contributes to the asymptotic, with the rest going into a power-saving error term. The bulk of the argument then comes down to evaluating
\[
\frac{1}{2 \pi i} \int_{c+i}^{c+iT} \chi(1-s) \frac{Z_1^{\prime}}{Z_1}(s) \zeta^{(n)}(1-s) \ ds
\]
for some specific $c>1$ defined in the proof.

We are able to write the logarithmic derivative of $Z_1 (s)$ as a series of the form
\[
\frac{Z_1'}{Z_1}(s) = \sum_{m=1}^\infty \frac{a(m,s)}{m^s}
\]
which converges for some $c>1$ with coefficients $a(n,s)$ given in Lemma \ref{lem:Dirichletlogderiv}. Using this and the Dirichlet series for the derivatives of $\zeta (s)$ allows us to apply the method of stationary phase to change this integral into a sum, given by
\begin{equation*}
(-1)^{n} \left(  \sum_{m_1m_2 \leq T/2 \pi} \Lambda (m_1) (\log m_1)^{n}
- \sum_{k=1}^{\infty} 2^k \sum_{j=0}^{n} \binom{n}{j} \sum_{m_1 m_2 \leq T/2 \pi} \frac{ (-1)^j (\log m_1)^j a_k(m_2) }{(\log m_1 m_2)^{k-n+j}} \right)
\end{equation*}
up to a power-saving error term, where the coefficients $a_k(m)$ come from $a(m,s)$. 

An application of Perron's formula turns the first sum back into an integral, which we can evaluate the residue of at the pole. Another application of Perron's formula turns the numerator of the inner sum of the second term into an integral that we evaluate the residue of at the pole, and complete the argument with partial summation and summing over $j=,0,\dots,n$ and $k \geq 1$.  

We then outline the proofs of Theorem \ref{thm:FirstMomRho1} in Section \ref{sect:ProofCor1}, of Theorem \ref{thm:ChiRho1} in Section \ref{sect:ProofThmChi1}, and of Theorem \ref{thm:ChiRho} in \ref{sect:ProofThmChi}, leaving the full details of the proofs to the interested reader as they are similar to that of Theorem \ref{thm:FirstMomnthDerivRho1}.

\section{Preliminary Lemmas}\label{sect:PrelimLem}
Define the function $Z_1 (s)$ by
\begin{equation}\label{eq:Z_1}
Z_1(s)=\zeta^{\prime}(s)-\frac{1}{2} \frac{\chi^{\prime}}{\chi}(s) \zeta(s).
\end{equation}
Note that by taking the logarithmic derivative of Hardy's $Z$ function, we see that $Z_1(s)$ is zero exactly when $Z ' (t) = 0$, that is, at an extrema of $|\zeta(s)|$ on the critical line. 

The function $Z_1(s)$ satisfies various properties which were proved by Conrey and Ghosh~\cite{ConGho85} in their lemma. Note that only the properties we use in our proof are listed below. For a proof of this lemma, we refer the reader to their paper.

\begin{lemma}\label{lem:CGLemma}
    We have the following properties for $Z_1(s)$:
    \begin{enumerate}
        \item $|Z_1(1/2 + it)| = |Z'(t)|$.
        \item $Z_1(s)$ satisfies the functional equation
        \[
        Z_1(s) = -\chi(s) Z_1 (1-s)
        \]
        for all $s$, where $\chi (s)$ is the term in the functional equation for the Riemann zeta function, $\zeta (s) = \chi (s) \zeta (1-s)$.
        \item The number of zeros of $Z_1(t)$ up to a height $T$ which lie off the critical line is bounded by $\log T$.
        \item If $\ Z_1(\beta_1 + i \gamma_1) = 0$ then 
        \[
        \left|\beta_1 - \frac{1}{2} \right| \leq \frac{1}{9}
        \]
        for $\gamma_1$ sufficiently large. 
    \end{enumerate}
\end{lemma}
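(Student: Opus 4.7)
The plan is to establish the four properties in order, with (1) and (2) being direct algebraic consequences of the definitions, (3) an application of the argument principle, and (4)---the only substantive analytic claim---coming from size estimates for $\chi'/\chi$ relative to $\zeta$ and $\zeta'$.

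For (1), I would differentiate $Z(t) = e^{i\theta(t)}\zeta(1/2+it)$ in $t$. Logarithmically differentiating the defining identity $e^{-2i\theta(t)} = \chi(1/2+it)$ gives $\theta'(t) = -\tfrac{1}{2}(\chi'/\chi)(1/2+it)$, and substituting shows $Z'(t) = i e^{i\theta(t)} Z_1(1/2+it)$; taking absolute values proves (1). For (2), differentiating $\chi(s)\chi(1-s)=1$ yields $(\chi'/\chi)(s) = (\chi'/\chi)(1-s)$, and differentiating \eqref{eq:FE} yields $\zeta'(s) = \chi'(s)\zeta(1-s) - \chi(s)\zeta'(1-s)$. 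Combining these two identities, together with $\chi(s)\zeta(1-s) = \zeta(s)$, lets one expand $-\chi(s)Z_1(1-s)$ and check it equals $Z_1(s)$ termwise.

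For (3), I would apply the argument principle to $Z_1(s)$ on a rectangle $R_T = [-A, 1+A] \times [2, T]$, where $A > 1/9$ is chosen so that by (4) every zero of $Z_1$ with ordinate in $[2,T]$ lies inside $R_T$. The total zero count (minus an $O(1)$ pole correction from $s=1$) equals $\tfrac{1}{2\pi}\Delta_{\partial R_T}\arg Z_1$. The horizontal edges contribute $O(\log T)$ by Stirling applied to $\chi'/\chi$ and convexity bounds on $\zeta, \zeta'$, while the two vertical edges---related by the functional equation (2)---contribute a main term of the Riemann--von Mangoldt form $\tfrac{T}{2\pi}\log(T/2\pi) + O(\log T)$. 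Separately, the zeros of $Z_1$ on the critical line are exactly the zeros of $Z'(t)$, of which there are at least $N(T) - O(1)$ by applying Rolle's theorem between consecutive zeros of $Z(t)$. The difference of these two counts gives the desired $O(\log T)$ bound on off-line zeros.

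For (4), the main obstacle, I would use (2) to reduce to showing $Z_1(s) \neq 0$ for $\sigma \geq 1/2 + 1/9$ and $|t|$ sufficiently large. Stirling gives $(\chi'/\chi)(s) = -\log(|t|/2\pi) + O(1/|t|)$, so in the half-plane $\sigma \geq 1+\varepsilon$ the term $\tfrac{1}{2}(\chi'/\chi)(s)\zeta(s)$ has magnitude $\gg \log|t|$ (since the Euler product bounds $|\zeta(s)|$ from below there), while $|\zeta'(s)| = O(1)$, forcing $Z_1(s) \neq 0$. The narrow strip $1/2 + 1/9 \leq \sigma \leq 1+\varepsilon$ is more delicate and requires finer pointwise estimates combining the same Stirling asymptotic with bounds on $|\zeta(s)|$ and $|\zeta'(s)|$; the precise threshold $1/9$ emerges from balancing the $\log|t|$ main term against the available bounds in this range. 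The $1/9$ is not sharp---any fixed positive constant less than $1/2$ would suffice for the applications later in the paper.
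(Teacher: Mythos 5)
The paper does not actually prove this lemma: immediately after stating it, the author writes ``For a proof of this lemma, we refer the reader to their paper,'' citing Conrey and Ghosh~\cite{ConGho85}. So there is no in-paper argument to compare against; what follows is an assessment of your outline on its own terms. Parts (1) and (2) are correct and complete: the identity $Z'(t) = i e^{i\theta(t)} Z_1(1/2+it)$ follows exactly as you describe, and the functional-equation check via $(\chi'/\chi)(s) = (\chi'/\chi)(1-s)$, the differentiated form of~\eqref{eq:FE}, and $\chi(s)\zeta(1-s)=\zeta(s)$ closes cleanly. Part (3) is a plausible sketch, though you should say explicitly that the Rolle step uses RH to make the zeros of $Z(t)$ real with count $N(T)$, and that the argument-principle count subtracts $2$ for the double pole of $Z_1$ at $s=1$ coming from $\zeta'$.

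Part (4) has a genuine gap. Your proposed resolution of the strip $1/2+1/9 \leq \sigma \leq 1+\varepsilon$ by ``finer pointwise estimates\ldots with bounds on $|\zeta(s)|$ and $|\zeta'(s)|$'' cannot work: in that range one has only upper bounds on $|\zeta(s)|$, so a magnitude comparison between $\zeta'(s)$ and $\tfrac12(\chi'/\chi)(s)\zeta(s)$ breaks down wherever $|\zeta(s)|$ is small. Even granting RH and bounding $|\zeta'/\zeta(s)|$ via the local expansion $\frac{\zeta'}{\zeta}(s)=\sum_{|\gamma-t|\leq 1}\frac{1}{s-\rho}+O(\log t)$, one only gets $|\zeta'/\zeta(s)|\ll \log t/(\pi\delta)$ at $\sigma=1/2+\delta$, which does \emph{not} beat the target $\tfrac12\log t$ unless $\delta>2/\pi\approx 0.64$, far larger than $1/9$. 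The mechanism that actually works is structural rather than pointwise. Factor $Z_1(s)=\zeta(s)\bigl(\tfrac{\zeta'}{\zeta}(s)-\tfrac12\tfrac{\chi'}{\chi}(s)\bigr)$ (valid for $\sigma>1/2$ under RH, where $\zeta(s)\neq 0$), and use the Hadamard expansion to write
\[
\frac{\zeta'}{\zeta}(s)-\frac12\frac{\chi'}{\chi}(s) \;=\; \text{const} \;+\; \sum_{\rho}\left(\frac{1}{s-\rho}+\frac{1}{\rho}\right) \;+\; O\!\left(\frac{1}{t}\right),
\]
the point being that the $-\tfrac12\log\frac{t}{2\pi}$ from the digamma term cancels exactly against $-\tfrac12(\chi'/\chi)(s)$. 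Under RH each summand has positive real part $\frac{\sigma-1/2}{|s-\rho|^2}$, and since $\gg\log t$ zeros satisfy $|\gamma-t|\leq 1$, the real part of the sum is $\gg_\delta \log t$ for any fixed $\sigma-1/2\geq\delta>0$, so the bracket cannot vanish for $t$ large. It is this one-sided (positivity) argument on the real part, not any convexity estimate, that makes the strip tractable; once you see it, your remark that $1/9$ is not sharp becomes correct for any fixed $\delta>0$, with the constraint $\delta<1/2$ only entering later so that the zeros of $Z_1$ stay inside the contour at $\sigma = 1\pm(1/2 + 1/\log T)$.
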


The logarithmic derivative of $Z_1(s)$ is given by
\begin{equation}\label{eq:logderivZ1FE}
    \frac{Z_1'}{Z_1}(s) = \frac{\chi '}{\chi} (s) - \frac{Z_1'}{Z_1}(1-s)
\end{equation}
which follows directly from part (2) in Lemma \ref{lem:CGLemma}.

%CHECK I NEED THE NEXT LINE
For $s=\sigma + it$, where $|\sigma| \leq 2$, we have the following estimate for the logarithmic derivative for $\chi (s)$ given by
    \begin{equation}\label{eq:ChiPrimeOverChi}
    \frac{\chi'}{\chi} (s) = - \log \frac{t}{2 \pi} + O \left(\frac{1}{t} \right),
    \end{equation}
(say for $t>1$) which can be found in \cite{ConGho85}, for example. This enables us to rewrite the functional equation above as
\begin{equation}\label{eq:logderivZ1FE2}
    \frac{Z_1'}{Z_1}(s) = - \log \frac{t}{2 \pi} - \frac{Z_1'}{Z_1}(1-s) + O \left(\frac{1}{t} \right).
\end{equation}

\begin{lemma}\label{lem:Dirichletlogderiv}
    For $\Re(s)> 1$ and $t \geq 100$, we have
\[
\frac{Z_1'}{Z_1}(s) = \sum_{m=1}^\infty \frac{a(m,s)}{m^s} + O \left( \frac{1}{t \log t} \right)
\]
where
\begin{equation}\label{eq:a(n,s)}
    a(m,s) = -\Lambda (m) + \sum_{k=1}^{\infty} \frac{1}{f(s)^k} a_k(m)
\end{equation}
with
\begin{equation}\label{eq:f(s)}
f(s) = - \frac{1}{2} \frac{\chi '}{\chi} (s)
\end{equation}
and $a_k(m)$ is of the form
\begin{equation}\label{eq:ak(n)}
a_k(m) = \left((\Lambda \log) \ast \Lambda_{k-1}\right)(m), 
\end{equation}
where we use the notation $\Lambda_k$ to denote 
\[
\underbrace{\Lambda \ast \Lambda \ast \Lambda \ast \dots  \ast \Lambda}_{k-1 \text{ convolutions}}
\]
with the convention that $\Lambda_0(m)$ takes the value $1$ if $m=1$ and $0$ otherwise, and where $\Lambda_1 (m)= \Lambda (m)$ is the usual von Mangoldt function.
\end{lemma}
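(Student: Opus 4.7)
The plan is to obtain the series by applying the logarithmic derivative to a suitable factorisation of $Z_1(s)$. Writing $L(s) = \zeta'(s)/\zeta(s)$, one has
\[
Z_1(s) = \zeta'(s) + f(s)\zeta(s) = f(s)\zeta(s)\Bigl(1 + \frac{L(s)}{f(s)}\Bigr),
\]
valid off the zeros of $\zeta$ and $f$. Taking the logarithmic derivative gives
\[
\frac{Z_1'}{Z_1}(s) = \frac{f'}{f}(s) + L(s) + \frac{d}{ds}\log\Bigl(1 + \frac{L(s)}{f(s)}\Bigr).
\]
For $\Re(s) \geq 1+\delta$ and $t$ large, $|L(s)|$ is bounded while $|f(s)| \sim \tfrac{1}{2}\log(t/2\pi)$ by \eqref{eq:ChiPrimeOverChi}, so $|L/f| = O(1/\log t) < 1$, justifying the expansion $\log(1+u) = \sum_{k \geq 1} (-1)^{k-1} u^k/k$.

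Next I would differentiate term by term. Using $(L/f)' = L'/f - L f'/f^2$, the resulting series splits as
\[
\frac{d}{ds}\log\bigl(1 + L/f\bigr) = \sum_{k=1}^{\infty}(-1)^{k-1}\frac{L^{k-1} L'}{f^k} \;-\; \frac{f'}{f}\sum_{k=1}^{\infty}(-1)^{k-1}(L/f)^{k}.
\]
Summing the geometric series $\sum_{k \geq 1}(-1)^{k-1}(L/f)^k = 1 - (1 + L/f)^{-1}$, the second piece combines with the leading $f'/f$ contribution to leave the single term $(f'/f)/(1 + L/f)$. Since $f'(s) = O(1/t)$ from \eqref{eq:ChiPrimeOverChi} and $|f(s)| \gg \log t$, this residual is $O(1/(t \log t))$ uniformly, which is absorbed into the stated error.

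Finally, substitute the Dirichlet series, absolutely convergent for $\Re(s) > 1$: $-L(s) = \sum_m \Lambda(m)/m^s$, so that $(-L(s))^{k-1} = \sum_m \Lambda_{k-1}(m)/m^s$ for $k \geq 1$, and $L'(s) = \sum_m \Lambda(m)\log(m)/m^s$. Multiplying these gives
\[
L(s)^{k-1}L'(s) = (-1)^{k-1}\sum_{m \geq 1}\frac{\bigl((\Lambda\log) \ast \Lambda_{k-1}\bigr)(m)}{m^s} = (-1)^{k-1}\sum_m \frac{a_k(m)}{m^s},
\]
so the factor $(-1)^{k-1}$ cancels the sign from the expansion. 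Interchanging the order of summation over $k$ and $m$, which is legitimate by absolute convergence for $\Re(s) > 1$ once $t$ is large enough to guarantee $|L/f| < 1$, combines the leading $L(s) = -\sum_m \Lambda(m)/m^s$ with the remaining terms into the single Dirichlet series $\sum_m a(m,s)/m^s$ with the claimed coefficients. The main technical obstacle is the bookkeeping of the $f'/f$ cancellation above: one must confirm it genuinely leaves a remainder of size $O(1/(t\log t))$ rather than a stray $O(1/\log t)$ term that would ruin the stated error estimate.
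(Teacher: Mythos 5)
Your proof is correct, and it takes a genuinely cleaner route than the paper's own argument. The paper differentiates $Z_1 = \zeta' + f\zeta$ directly, splits off $\frac{f'\zeta}{\zeta'+f\zeta}$ as the error term, divides through by $\zeta$ to get $\frac{\zeta''/\zeta + f\,\zeta'/\zeta}{\zeta'/\zeta + f}$, expands the denominator as a geometric series, and then needs the recurrence $\Lambda^{(2)} = \Lambda\log + \Lambda * \Lambda$ to eliminate $\zeta''/\zeta$ from the coefficients. You instead factor $Z_1 = f\zeta(1 + L/f)$, take the logarithmic derivative so the three pieces add, and expand $\log(1 + L/f)$; this never introduces $\zeta''$ at all, since $L'$ already has Dirichlet coefficients $\Lambda(m)\log m$, and it makes the sign bookkeeping $(-1)^{k-1}\cdot(-1)^{k-1} = 1$ transparent. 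The ``technical obstacle'' you flagged does in fact close cleanly: after summing the geometric series $\sum_{k\ge1}(-1)^{k-1}(L/f)^k = 1 - (1+L/f)^{-1}$, the leading $f'/f$ cancels exactly, and the residue is $\frac{f'/f}{1+L/f} = \frac{f'}{f+L}$, which is precisely the error piece $f' \cdot \frac{1}{\zeta'/\zeta + f}$ that appears verbatim in the paper's proof; with $f' = O(1/t)$ and $|f+L| \gg \log t$ this is $O(1/(t\log t))$ as required, with no stray $O(1/\log t)$ remaining. The only point worth tightening: you justify $|L/f| < 1$ by taking $\Re(s) \ge 1 + \delta$ so $L$ is bounded, whereas the lemma is stated for all $\Re(s) > 1$ (and is applied at $c = 1 + 1/\log T$); there the paper invokes Littlewood's conditional bound $\zeta'/\zeta(s) = O(\log\log t)$ under RH, which still gives $|L/f| = O(\log\log t/\log t) \to 0$, so your argument goes through once that bound is substituted.
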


We give a variation of the proof given in \cite{HugLugPea24}.

\begin{proof}
Recall that we have defined the function $Z_1 (s)$ in \eqref{eq:Z_1} by
\begin{equation*}
Z_1(s)=\zeta^{\prime}(s) + f(s) \zeta(s)
\end{equation*}
where $f(s)$ is given in \eqref{eq:f(s)}. Differentiating this gives
\begin{equation*}
    Z_1'(s) = \zeta ''(s) + f(s) \zeta'(s) + f'(s) \zeta(s).
\end{equation*}
Then 
\begin{equation}\label{eq:logderivZ1}
    \frac{Z_1'}{Z_1}(s) = \frac{\zeta''(s) +  f(s) \zeta'(s)}{\zeta'(s) + f(s) \zeta(s)} + f'(s) \frac{1}{\frac{\zeta'}{\zeta}(s) + f(s)}.
\end{equation}
For $\Re(s)>1$ with $t\geq 1$, the last term can be bounded by $O(1/t \log t)$ by noting that $f'(s) = O(1/t)$ (which follows from \eqref{eq:ChiPrimeOverChi}), $f(s) = O(\log t)$ and $\zeta'(s)/\zeta(s) = O(\log\log t)$ (due to Littlewood \cite{Lit24}, under the Riemann Hypothesis) in the region under consideration. Therefore we can write
\begin{equation*}
    \frac{Z_1'}{Z_1}(s) = \frac{\frac{\zeta''}{\zeta}(s) +  f(s) \frac{\zeta'}{\zeta}(s)}{\frac{\zeta'}{\zeta}(s) + f(s)} + O\left(\frac{1}{t \log t}\right).
\end{equation*}

For $s$ with sufficiently large real part, we can expand the denominator as a geometric series. This gives
\begin{align*}
    \frac{1}{\frac{\zeta'}{\zeta}(s) + f(s) } &= \frac{1}{f(s)}\left(1 + \frac{1}{f(s)}  \frac{\zeta'}{\zeta}(s) \right)^{-1}\\
    &= \frac{1}{f(s)} \sum_{k=0}^\infty \frac{1}{f(s)^k} \left(-\frac{\zeta'}{\zeta}(s)\right)^k.
\end{align*}
This expansion will be valid so long as $\Re(s)$ is large enough so that $\left| \frac{1}{f(s)}  \frac{\zeta'}{\zeta}(s)  \right| \leq 1$. For example, we can take $\Re (s)>1$ and $t \geq 100$.

Then we have
\begin{align}\label{eq:midZ1}
    \frac{Z_1'}{Z_1}(s) &= \frac{1}{f(s)} \left(\frac{\zeta''}{\zeta}(s) +  f(s) \frac{\zeta'}{\zeta}(s) \right) \sum_{k=0}^\infty \frac{1}{f(s)^k} \left(-\frac{\zeta'}{\zeta}(s)\right)^k + O\left(\frac{1}{t \log t}\right) \\
    &= \sum_{n=1}^\infty \frac{a(n,s)}{n^s} + O \left( \frac{1}{t \log t} \right) \notag
\end{align}
where we calculate the coefficients $a(n,s)$ below.

By repeated use of the quotient rule, we may write
\begin{equation*} 
    \frac{d}{ds} \left( \frac{\zeta^{(k)}}{\zeta} (s) \right) = \frac{\zeta^{(k+1)}}{\zeta} (s) - \frac{\zeta'}{\zeta} (s) \frac{\zeta^{(k)}}{\zeta} (s).
\end{equation*}
Rearranging this we have
\begin{equation*} 
    \frac{\zeta^{(k+1)}}{\zeta} (s) = \frac{d}{ds} \left( \frac{\zeta^{(k)}}{\zeta} (s) \right) + \frac{\zeta'}{\zeta} (s) \frac{\zeta^{(k)}}{\zeta} (s).
\end{equation*}
The Dirichlet coefficients of these series then give us the recurrence relation for any non-negative integer $k$,
\begin{equation}\label{eq:RecLam}
\Lambda^{(k+1)} (n) = \Lambda^{(k)} (n) \log (n) + (\Lambda^{(k)} (n) \ast \Lambda) (n),
\end{equation}
for each $n \in \mathbb{N}$, where $\Lambda^{(1)} (n) = \Lambda (n)$ is the usual von Mangoldt function, that is, the coefficient from the Dirichlet series for $-\zeta'(s)/\zeta (s)$. (Note that this notation is not standard - see the remark at the end of this proof.)

Consider now the coefficient from each of the Dirichlet series in \eqref{eq:midZ1}. For the terms on the right-hand side of that equation (ignoring the error term for now), we have
\begin{equation*}
    a(n,s) = \left( \frac{1}{f(s)} \Lambda^{(2)}(n) - \Lambda(n) \right)\sum_{k=0}^\infty \frac{1}{f(s)^k} \Lambda_k(n).
\end{equation*}

Now use the recurrence relation \eqref{eq:RecLam} with $k=1$ to write
\[
\Lambda^{(2)} (n) = ( \Lambda \log + \Lambda_2)(n).
\]
Substitute this into the previous expression to obtain for each $n \in \mathbb{N}$,
\begin{align*}
    a(n,s) &= \left( \frac{1}{f(s)} (\Lambda \log + \Lambda_2)(n) - \Lambda(n) \right)\sum_{k=0}^\infty \frac{1}{f(s)^k} \Lambda_k(n) \\
    &= \sum_{k=0}^{\infty} \frac{1}{f(s)^{k+1}} ((\Lambda \log) \ast \Lambda_{k})(n) + \sum_{k=0}^{\infty} \frac{1}{f(s)^{k+1}} (\Lambda_{2}\ast \Lambda_{k})(n) - \sum_{k=0}^{\infty} \frac{1}{f(s)^{k}} (\Lambda \ast \Lambda_k)(n) \\
    &= \sum_{k=1}^{\infty} \frac{1}{f(s)^{k+1}} ((\Lambda \log) \ast \Lambda_{k})(n) + \sum_{k=1}^{\infty} \frac{1}{f(s)^{k}} \Lambda_{k+1}(n) - \sum_{k=0}^{\infty} \frac{1}{f(s)^{k}} \Lambda_{k+1}(n) \\
    &= \sum_{k=1}^{\infty} \frac{1}{f(s)^{k+1}} ((\Lambda \log) \ast \Lambda_{k})(n) + \sum_{k=0}^{\infty} \frac{1}{f(s)^{k}} \Lambda_{k+1}(n) - \Lambda(n) - \sum_{k=0}^{\infty} \frac{1}{f(s)^{k}} \Lambda_{k+1}(n) \\
    &= \sum_{k=0}^{\infty} \frac{1}{f(s)^{k}} ((\Lambda \log) \ast \Lambda_{k-1})(n) - \Lambda(n), \\
\end{align*}
completing the proof of the Lemma.
\end{proof}

\begin{remark}
    Note that while we can only use the expansion in this Lemma for $t \geq 100$, in the proofs of our results in later sections we will ignore this slight technicality as using the result for $t \geq 1$ in our applications will only introduce an error of $O(1)$, which is well within our other error terms. 
\end{remark}

\begin{remark}
    As our notation $\Lambda_k$ is used for $(k-1)$-convolutions, we use non-standard notation
    \[
    \frac{\zeta^{(k)}}{\zeta}  (s) = \sum_{n=1}^{\infty} \frac{\Lambda^{(k)} (n)}{n^s}.
    \]
    Typically in the literature, $\Lambda_k (n)$ is used here but we have already used it in a different meaning in this section, and want to stay consistent with the notation in \cite{HugLugPea24}. Since this proof is the only place in this section that we use this convention, we don't worry about any confusion or conflicts and remind the reader that we are using $\Lambda_k$ to denote $(k-1)$-convolutions.
\end{remark}

We will be applying the method of stationary phase at various points throughout our argument. The following version of this result can be found in \cite{KaraYil11}, and is based on a similar result due to Gonek \cite{Gonek84} where he took $k$ in the following lemma to be a non-negative integer.
\begin{lemma}\label{lem:StatPhase}
Let $\{b_m\}_{m=1}^{\infty}$ be a sequence of complex numbers such that for any $\varepsilon>0$, $b_m \ll m^\varepsilon$. Let $c>1$ and suppose $|k| = o(\log T)$ as $T \rightarrow \infty$. Then for $T$ sufficiently large,
\[
\frac{1}{2 \pi} \int_{0}^{T} \chi (1-c-it) \left( \log \frac{t}{2 \pi} \right)^k \left( \sum_{m=1}^{\infty} b_m m^{-c -it} \right) dt = \sum_{ m \leq \frac{T}{2 \pi}} b_m (\log m)^k + O\left(T^{c-1/2} (\log T)^k\right).
\]
\end{lemma}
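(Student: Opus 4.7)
The plan is to interchange summation and integration, reduce the resulting oscillatory integrals to a standard saddle-point form via the large-$t$ asymptotic for $\chi(1-c-it)$, and then apply the method of stationary phase uniformly in $m$.

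First I would justify swapping sum and integral: since $c>1$ and $b_m \ll m^\varepsilon$, the Dirichlet series $\sum b_m m^{-c-it}$ converges absolutely and uniformly in $t$, while $\chi(1-c-it)(\log(t/2\pi))^k$ is locally bounded on $[0,T]$. After the swap it suffices to analyse, for each $m \geq 1$, the oscillatory integral
\[
I_m := \int_{0}^{T} \chi(1-c-it)\left(\log \tfrac{t}{2\pi}\right)^k m^{-it}\,dt.
\]
Using the standard asymptotic
\[
\chi(1-c-it) = \left(\tfrac{t}{2\pi}\right)^{c-1/2}\exp\!\left(i\bigl(t\log\tfrac{t}{2\pi} - t - \tfrac{\pi}{4}\bigr)\right)\bigl(1 + O(1/t)\bigr),
\]
$I_m$ takes the form $\int A(t) e^{i\phi_m(t)}\,dt$ with amplitude $A(t) = (t/2\pi)^{c-1/2}(\log(t/2\pi))^k$ and phase $\phi_m(t) = t\log\frac{t}{2\pi m} - t - \frac{\pi}{4}$, modulo an error that is easily absorbed after multiplying by $b_m m^{-c}$ and summing.

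I would then apply stationary phase. Since $\phi_m'(t) = \log(t/(2\pi m))$ vanishes precisely at the stationary point $t_m = 2\pi m$ with $\phi_m''(t_m) = 1/(2\pi m) > 0$, the saddle lies in $[0,T]$ exactly when $m \leq T/(2\pi)$. The classical stationary-phase formula then yields
\[
I_m = A(t_m)\sqrt{\tfrac{2\pi}{\phi_m''(t_m)}}\,e^{i\phi_m(t_m) + i\pi/4}\bigl(1 + \text{lower order}\bigr) = 2\pi\,m^{c}(\log m)^k + R_m,
\]
since $e^{i\phi_m(t_m) + i\pi/4} = e^{-2\pi i m} = 1$ for $m \in \mathbb{Z}$. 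Multiplying by $(2\pi)^{-1}b_m m^{-c}$ and summing over $m \leq T/(2\pi)$ reproduces exactly the stated main term. For $m > T/(2\pi)$ the phase has no stationary point in $[0,T]$ and $|\phi_m'(t)| \geq \log(2\pi m/T)$ throughout; a standard integration by parts against $e^{i\phi_m}$ then bounds $I_m$ by a quantity which, after summing against $b_m m^{-c}$, contributes at most $O(T^{c-1/2}(\log T)^k)$ in total.

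The main obstacle, and the point where Gonek's argument (valid for non-negative integer $k$) must be strengthened to the form required here, is the uniform control of the $k$-dependence of the errors. Differentiating $A(t) = (t/2\pi)^{c-1/2}(\log(t/2\pi))^k$ to produce higher-order stationary-phase corrections introduces factors of roughly $k/(t\log(t/2\pi))$ per derivative; to keep these subsidiary terms within the claimed $O(T^{c-1/2}(\log T)^k)$ — rather than allowing them to be swamped by additional $|k|^j$ factors — one uses precisely the hypothesis $|k| = o(\log T)$. Additional care is needed in the transition region $m \approx T/(2\pi)$, where the saddle approaches the endpoint $t = T$ and the stationary-phase and integration-by-parts estimates must be reconciled via a dyadic split. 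Modulo that bookkeeping, which is carried out in detail in \cite{KaraYil11}, the argument reduces to well-known applications of the saddle-point method.
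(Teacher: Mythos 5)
The paper does not prove this lemma at all; it states it as a known result cited from \cite{KaraYil11}, with the non-negative-integer-$k$ case attributed to Gonek \cite{Gonek84}. There is therefore no in-paper proof to compare against. Your sketch does, however, accurately reproduce the standard Gonek-type stationary-phase argument that those references use: interchange of sum and integral (legitimate since $c>1$ gives absolute convergence), the asymptotic $\chi(1-c-it) = (t/2\pi)^{c-1/2}e^{i(t\log(t/2\pi)-t-\pi/4)}(1+O(1/t))$, the phase $\phi_m(t)$ with saddle at $t_m = 2\pi m$ and $\phi_m''(t_m)=1/(2\pi m)$, the main-term evaluation $I_m \approx 2\pi m^{c}(\log m)^k$ (your arithmetic here checks out, including $e^{i\phi_m(t_m)+i\pi/4}=e^{-2\pi i m}=1$), and the first-derivative bound for $m>T/(2\pi)$ where there is no interior stationary point. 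You also correctly isolate the delicate part: uniform control of the $k$-dependent error terms, which is exactly where $|k|=o(\log T)$ enters and where Gonek's argument must be strengthened, together with the endpoint/transition region $m\approx T/(2\pi)$. These details are deferred to \cite{KaraYil11}, which is appropriate for a sketch. One small caveat you might make explicit: for negative $k$ the factor $(\log(t/2\pi))^k$ is singular at $t=2\pi$ (and $\chi(1-c-it)$ is not given by the above asymptotic near $t=0$), so the contribution from a bounded initial segment of the integration range has to be handled separately, absorbed into the error term, rather than subsumed under a ``locally bounded'' justification for the interchange.
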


We remind the read that in this paper we will have the convention that the Laurent expansion for $\zeta (s)$ about $s=1$ is given by
\begin{equation*}
\zeta(s) = \frac{1}{s-1} + \gamma_0 - \gamma_1 (s-1) + \frac{1}{2!} \gamma_2 (s-1)^2 + \dots + (-1)^j \frac{1}{j!} \gamma_j (s-1)^j + \dots.
\end{equation*}

The last result we need is from the proof of the main result from Hughes and Pearce-Crump \cite{HugPC22}. This result is unconditional in that paper, but as we are assuming the Riemann Hypothesis here, we will also assume it in the statement of this theorem (this only affects the error term). 

\begin{theorem}\label{thm:HugPC}
Assume the Riemann Hypothesis. For $\rho = \tfrac{1}{2} + i\gamma$ a non-trivial zero of $\zeta (s)$, we have
    \begin{align*}
        &\sum_{0 < \gamma \leq T} \zeta^{(n)} \left( \tfrac{1}{2} + i\gamma \right) \\
        &= (-1)^{n+1}  \sum_{m_1m_2 \leq T/2 \pi} \Lambda (m_1) (\log m_1)^{n} + O\left(T^{1/2 + \varepsilon}\right) \\
        &= (-1)^{n+1} \frac{1}{n+1} \frac{T}{2 \pi} \left( \log  \frac{T}{2 \pi} \right)^{n+1} \\
        &+ (-1)^{n+1} \sum_{k=0}^n \binom{n}{k}  (-1)^ {k}  k!  \left(-1 + \sum_{j=0}^k \frac{1}{j!}\gamma_j \right) \frac{T}{2 \pi} \left( \log  \frac{T}{2 \pi} \right)^{n-k} + n! A_n  \frac{T}{2 \pi}  + O\left(T^{1/2 + \varepsilon}\right)
    \end{align*}
where the $\gamma_j$ are the coefficients in the Laurent expansion of $\zeta (s)$ about $s=1$, given in \eqref{eq:LaurentZeta} and $A_n$ is the $n$th coefficient in the Laurent expansion of $\zeta' (s)/\zeta (s)$ about $s=1$.
\end{theorem}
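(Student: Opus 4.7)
The plan follows the contour-integral strategy of Conrey, Ghosh and Gonek \cite{CGG88}, specialised to $\zeta^{(n)}$. The first step is the argument principle on a positively-oriented rectangle $\mathcal{C}$ with vertices $c \pm iT$ and $1-c \pm iT$, where $c = 1 + 1/\log T$:
\[
\sum_{0 < \gamma \leq T} \zeta^{(n)}(\rho) = \frac{1}{2\pi i}\oint_{\mathcal{C}}\frac{\zeta'}{\zeta}(s)\zeta^{(n)}(s)\,ds.
\]
The horizontal edges contribute $O((\log T)^{n+1})$ by standard RH bounds on $\zeta'/\zeta$ and $\zeta^{(n)}$, while the right edge contributes $O(1)$ since $(\zeta'/\zeta)(s)\zeta^{(n)}(s) = (-1)^{n+1}\sum_{m_1,m_2}\Lambda(m_1)(\log m_2)^n(m_1m_2)^{-s}$ integrates termwise to absolutely summable tails, with the $m_1 = m_2 = 1$ term vanishing as $\Lambda(1) = 0$.

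For the left edge, substitute $s = 1-w$ and invoke the functional equation: $(\zeta'/\zeta)(1-w) = (\chi'/\chi)(w) - (\zeta'/\zeta)(w)$ (using $\chi(s)\chi(1-s) = 1$) and, by Leibniz,
\[
\zeta^{(n)}(1-w) = \sum_{j=0}^n\binom{n}{j}\chi^{(n-j)}(1-w)(-1)^j\zeta^{(j)}(w),
\]
where $\chi^{(n-j)}(1-w) = \chi(1-w)Q_{n-j}(w)$ with $Q_{n-j}$ a polynomial in $\chi'/\chi$ and its derivatives whose leading term is $(-\log(t/2\pi))^{n-j}$ (by \eqref{eq:ChiPrimeOverChi}). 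This re-expresses the integrand as $\chi(1-w)$ times a sum of Dirichlet series convergent at $\Re w = c$, each weighted by some power $(\log(t/2\pi))^a$.

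Now apply the stationary-phase formula (Lemma~\ref{lem:StatPhase}) to each piece. The leading group, where $-\log(t/2\pi)$ from $\chi'/\chi$ meets the leading $(-\log(t/2\pi))^{n-j}$ from $\chi^{(n-j)}(1-w)/\chi(1-w)$, collapses via the identity $\sum_{j=0}^n\binom{n}{j}(-1)^j = 0$ (valid for $n \geq 1$). The surviving main contribution comes from the cross-term where $-(\zeta'/\zeta)(w)$ meets $(-\log(t/2\pi))^{n-j}$; after stationary phase this becomes $\sum_{m_1m_2 \leq T/2\pi}\Lambda(m_1)(\log m_2)^j(\log m_1m_2)^{n-j}$, and expanding $\log m_1m_2 = \log m_1 + \log m_2$ together with $\sum_{j=0}^{n-\ell}\binom{n-\ell}{j}(-1)^j = 0$ for $\ell < n$ isolates $\ell = n$, yielding $(-1)^n\sum_{m_1m_2 \leq T/2\pi}\Lambda(m_1)(\log m_1)^n$. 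Combining with the sign from contour orientation produces the first displayed equality, modulo $O(T^{1/2+\varepsilon})$.

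For the closed-form asymptotic, observe that the coefficients $c_k = \sum_{m_1m_2 = k}\Lambda(m_1)(\log m_1)^n$ have Dirichlet series $(-1)^{n+1}(\zeta'/\zeta)^{(n)}(s)\zeta(s)$. Perron's formula, with contour shifted under RH to $\Re s = 1/2 + \varepsilon$, identifies $\sum_{k \leq x}c_k$ with the residue at $s = 1$ (a pole of order $n+2$) of $(-1)^{n+1}(\zeta'/\zeta)^{(n)}(s)\zeta(s)x^s/s$, plus $O(T^{1/2+\varepsilon})$. Expanding $\zeta(s) = (s-1)^{-1} + \sum_{j \geq 0}(-1)^j\gamma_j(s-1)^j/j!$ and $\zeta'/\zeta(s) = -(s-1)^{-1} + \sum_{k \geq 0}A_k(s-1)^k$, computing $(\zeta'/\zeta)^{(n)}$, and extracting the coefficient of $(s-1)^{-1}$ in the product with $x^s/s$ at $x = T/2\pi$ yields the stated polynomial in $\log(T/2\pi)$ together with the $n!A_n$ constant. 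The main obstacle is the combinatorial bookkeeping in the stationary-phase step: one must push all sub-leading polynomial corrections from $Q_{n-j}$ into the $O(T^{1/2+\varepsilon})$ error, and verify the two layers of binomial cancellation that collapse the many naïve main-term pieces to the single clean double sum.
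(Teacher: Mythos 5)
Your proposal is essentially the correct argument: the paper itself does not prove Theorem~\ref{thm:HugPC} but cites it from \cite{HugPC22}, and your contour-integral $+$ functional-equation $+$ stationary-phase $+$ Perron strategy is exactly the one used there and reproduced in the present paper for the analogous piece $I_{2,1}$ in the proof of Theorem~\ref{thm:FirstMomnthDerivRho1}. The two binomial collapses you identify are the right mechanism: after Lemma~\ref{lem:StatPhase} the weight $(\log(t/2\pi))^{n-j+1}$ turns into $(\log m)^{n-j+1}$, which combines with the $(-1)^j(\log m)^j$ coefficient of $\zeta^{(j)}$ to give $(-1)^j(\log m)^{n+1}$ and hence $\sum_j\binom{n}{j}(-1)^j=0$; and in the cross-term the expansion of $(\log m_1m_2)^{n-j}$ together with $\binom{n}{j}\binom{n-j}{\ell}=\binom{n}{\ell}\binom{n-\ell}{j}$ isolates $\ell=n$. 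The Perron step — Dirichlet series $(-1)^{n+1}(\zeta'/\zeta)^{(n)}(s)\zeta(s)$, pole of order $n+2$ at $s=1$, residue extracted against $x^s/s$ — is also correct and gives the closed form including $n!\,A_n$.

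Two small inaccuracies worth flagging, though neither breaks the argument. First, the right edge of the contour is not $O(1)$: with $c=1+1/\log T$ the tails $\sum_{m_1m_2\ge 2}\Lambda(m_1)(\log m_2)^n(m_1m_2)^{-c}/\log(m_1m_2)$ diverge as $c\to 1^+$ and give a bound polynomial in $\log T$ (roughly $(\log T)^{n+2}$), not a constant; this is still well inside $O(T^{1/2+\varepsilon})$. Second, the horizontal edges are not $O((\log T)^{n+1})$ unconditionally: one needs the RH-conditional bound $\zeta'/\zeta(\sigma+iT)\ll(\log T)^2$ for $T$ chosen away from ordinates of zeros, \emph{together with} a growth bound on $\zeta^{(n)}(\sigma+iT)$ which on the line $\sigma=1-c<1/2$ is only $O(T^{1/2+\varepsilon})$ by convexity/Lindel\"of; the honest bound for the horizontal segments is therefore $O(T^{1/2+\varepsilon})$, which is what the paper actually uses. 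Finally, your displayed cross-term omits the $(-1)^j$ from the Dirichlet coefficient of $\zeta^{(j)}$, although you clearly use it in the subsequent binomial identity, so this appears to be a typo rather than an error.
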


\section{Proof Theorem \ref{thm:FirstMomnthDerivRho1}}\label{sect:ProofThmDeriv}
\subsection{Initial Manipulations}
We note that the beginning of the proofs of Theorems \ref{thm:FirstMomnthDerivRho1}, \ref{thm:FirstMomRho1}, and \ref{thm:ChiRho1} start in a similar way to that of Conrey and Ghosh \cite{ConGho85}, and of Hughes, Lugmayer and Pearce-Crump \cite{HugLugPea24}. We repeat it here for completeness sake but skip over the details in the later cases.

Under the Riemann Hypothesis, we know that the zeros $\lambda$ of $Z'(t)$ interlace those of $Z(t)$, and so the set of points where $|\zeta (1/2+it)|$ achieves its local maximum are exactly the points where $Z'(t)=0$. 

Let  $\rho_{1} = \beta_{1} + i \gamma_{1}$ be the zeros of $Z_1(s)$. By part (3) of Lemma \ref{lem:CGLemma},  $Z_1(s)$ has $O(\log T)$ zeros off the critical line (that is, $\beta_1 \neq 1/2$). At the zeros off the critical line, we may use part (4) of Lemma \ref{lem:CGLemma} together with the Lindel\"of bounds on $\zeta (s)$ and its derivatives to obtain
\[
\sum_{\substack{0< \gamma_{1} \leq T \\ \beta_1 \neq 1/2}} \zeta^{(n)} (\rho_{1}) \ll T^{1/9+\varepsilon} \log T \ll T^{1/9 + \varepsilon}
\]
for any integer $n \geq 0$. 

Therefore, we have
\begin{equation*}
    \sum_{0< \lambda \leq T} \zeta^{(n)} \left( \frac{1}{2} + i\lambda \right) =  \sum_{\substack{0 < \gamma_1 \leq T \\ \beta_1 = 1/2}} \zeta^{(n)} \left( \frac{1}{2} + i\gamma_1 \right).
\end{equation*}

We can write this as an integral using Cauchy's theorem, 
\begin{align}\label{eq:Cauchy}
\frac{1}{2\pi i} \int_{\mathcal{C}} \frac{Z_1'}{Z_1}(s) \zeta^{(n)} (s) \ ds &= \sum_{0 < \gamma_1 \leq T} \zeta^{(n)} (\rho_1) \notag  \\
&= \sum_{\substack{0 < \gamma_1 \leq T \\ \beta_1 = 1/2}} \zeta^{(n)} \left( \frac{1}{2} + i\gamma_1 \right) + O\left(T^{1/9 + \varepsilon} \right)
\end{align}
where $\mathcal{C}$ is a positively oriented contour with vertices $c + i$, $c + iT$, $1-c+iT$, and $1-c +i$, where $c=1+1/\log T$. We may assume, without loss of generality, that the distance from the contour to any zero $\rho_1$ of $Z_1(s)$ is uniformly $\gg 1/\log T$.

We split the integral as 
\begin{align*}
&\frac{1}{2 \pi i} \left( \int_{c+i}^{c+iT} + \int_{c+iT}^{1-c+iT} + \int_{1-c+iT}^{1-c+i} + \int_{1-c+i}^{c+i} \right) \frac{Z_1 '}{Z_1}(s) \zeta ^{(n)} (s) \ ds \\
&= S^R + S^T + S^L + S^B,
\end{align*}
say. We will first bound $S^B, S^T, S^R$ trivially within error term, and show that the leading contributions to the asymptotic will then come from $S^L$. Consider each of these integrals in turn. 

First note that the integral $S^B$, which is the integral along the bottom of the contour, is $O(1)$. 

For the integral $S^T$, which is the integral along the top of the contour, we may use the bound proved in \cite{HugLugPea24} that states if $|T-\rho_1| \gg 1/\log T$ for all zeros $\rho_1$ of $Z_1$ (which we are in this proof), we have
\[
\frac{Z_1'}{Z_1}(\sigma + i T) \ll (\log T)^2
\]
uniformly for $-1< \sigma \leq 2$. Combining the Lindel\"of bounds on $\zeta (s)$ and its derivatives with this shows that the integral along the top of the contour is $O\left(T^{1/2 + \varepsilon} \right)$. 

Next note that for the integral $S^R$, which is the integral along the right-hand vertical side of the contour, we have $c>1$ which is past the abscissa of convergence for $\zeta^{(n)} (s)$ and $Z_1(s)$ as given in \eqref{eq:Z_1}. Then 
\[
S^R = \frac{1}{2 \pi} \int_{0}^{T} \frac{Z_1'}{Z_1}(c+it) \zeta^{(n)}(c+it) \ dt \ll T^\varepsilon.
\]

Note then that $S^B, S^T, S^R$ are all within an error term of $O\left(T^{1/2+\varepsilon}\right)$. All that remains is to evaluate $S^L$. Observe that
\[
S^L = \frac{1}{2 \pi i} \int_{1-c+iT}^{1-c+i} \frac{Z_1 '}{Z_1}(s) \zeta ^{(n)} (s) \ ds = -\frac{1}{2 \pi i} \int_{c-iT}^{c-i} \frac{Z_1 '}{Z_1}(1-s) \zeta ^{(n)} (1-s) \ ds = - \overline{I},
\]
where
\begin{equation}
I = \frac{1}{2 \pi i} \int_{c+i}^{c+iT} \frac{Z_1 '}{Z_1}(1-s) \zeta ^{(n)} (1-s) \ ds. \label{eq:II}
\end{equation}

Overall, we have 
\begin{equation*}
    \sum_{0< \lambda \leq T} \zeta^{(n)} \left( \frac{1}{2} + i\lambda \right) =  - \overline{I} + O\left(T^{1/2+\varepsilon}\right).
\end{equation*}

\subsection{Deriving the main terms}
To begin manipulating $I$ into a form that we can evaluate, we use the logarithmic derivative of the functional equation for $Z_1 (s)$ given in \eqref{eq:logderivZ1FE}. We will also use for functional equation for the derivatives of $\zeta (1-s)$, which can be derived easily from the functional equation for $\zeta (s)$, given in \ref{eq:FE}. Substituting these two expressions into \eqref{eq:II} gives
\begin{align*}
    I &= \frac{1}{2 \pi i} \int_{c+i}^{c+iT} \left( - \log \frac{t}{2\pi} \right) (-1)^n \chi (1-s) \sum_{j=0}^n \binom{n}{j} \left(  \log \frac{t}{2 \pi} \right)^{n-j} \zeta^{(j)} (s) \ ds \\
    &\quad + \frac{1}{2 \pi i} \int_{c+i}^{c+iT} \left( - \frac{Z_1 '}{Z_1} (s) \right) (-1)^n \chi (1-s) \sum_{j=0}^n \binom{n}{j} \left( \log \frac{t}{2 \pi} \right)^{n-j} \zeta^{(j)} (s) \ ds + O \left( T^{1/2 + \varepsilon } \right).
\end{align*}
Simplifying slightly gives
\begin{align*}
    I &= \frac{(-1)^{n+1}}{2 \pi i} \int_{c+i}^{c+iT} \chi (1-s) \sum_{j=0}^n \binom{n}{j} \left( \log \frac{t}{2 \pi} \right)^{n-j+1} \zeta^{(j)} (s) \ ds \\
    &\quad +\frac{(-1)^{n+1}}{2 \pi i} \int_{c+i}^{c+iT}  \chi (1-s) \sum_{j=0}^n \binom{n}{j} \left( \log \frac{t}{2 \pi} \right)^{n-j} \zeta^{(j)} (s) \frac{Z_1 '}{Z_1} (s) \ ds + O \left( T^{1/2 + \varepsilon } \right) \\
    &= I_1 + I_2 + O \left( T^{1/2 + \varepsilon } \right),
\end{align*}
say. 

\subsection{The integral $I_1$}
First consider $I_1$. We want to apply Lemma \ref{lem:StatPhase} here to extract terms for the asymptotic. To do this we begin by switching the order of integration and summation and substitute the Dirichlet series for $\zeta^{(j)} (s) $ to obtain
\begin{equation*}
    I_1 = (-1)^{n+1} \sum_{j=0}^n \binom{n}{j} \left[ \frac{1}{2 \pi i} \int_{c+i}^{c+iT} \chi (1-s) \left( \log \frac{t}{2 \pi} \right)^{n-j+1} \sum_{m=1}^{\infty} \frac{(-1)^j (\log m)^j}{m^s} \ ds \right]
\end{equation*}
Now apply Lemma \ref{lem:StatPhase} to the integral, obtaining
\begin{equation*}
    I_1 = (-1)^{n+1} \sum_{j=0}^n \binom{n}{j} \left[ \sum_{m \leq T/ 2\pi} (\log m)^{n-j+1} (-1)^j \log(m)^j \right] + O \left( T^{1/2 + \varepsilon } \right).
\end{equation*}
Simplifying this gives
\begin{equation*}
    I_1 = (-1)^{n+1} \sum_{j=0}^n \binom{n}{j} (-1)^j \left[ \sum_{m \leq T/ 2\pi} (\log m)^{n+1} \right] + O \left( T^{1/2 + \varepsilon } \right).
\end{equation*}
Now by the Binomial Theorem, this summation over $j$ equals zero for $n>0$ and equals $1$ if $n=0$. Hence in the case $n=0$, we have a contribution of 
\begin{equation*}
    I_1 = - \sum_{m \leq T/ 2\pi} \log m+ O \left( T^{1/2 + \varepsilon } \right).
\end{equation*}

By standard methods, we have 
\[
I_1 = -\frac{T}{2\pi} \log \frac{T}{2\pi} + \frac{T}{2\pi} + O \left( T^{1/2 + \varepsilon}\right).
\]
and so we have proved the following result.

\begin{lemma}\label{lem:I_1}
    For $n$ a non-negative integer, the integral $I_1$ satisfies
    \begin{equation*}
    I_1 =
    \begin{dcases*}
        -\frac{T}{2\pi} \log \frac{T}{2\pi} + \frac{T}{2\pi} + O \left( T^{1/2 + \varepsilon}\right)
   & \text{ if } n=0  \\
        O \left( T^{1/2 + \varepsilon} \right) 
   & \text{ if } n > 0.
    \end{dcases*}
    \end{equation*}
\end{lemma}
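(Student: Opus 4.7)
The plan is straightforward: apply the stationary phase lemma (Lemma \ref{lem:StatPhase}) to convert each inner integral into a finite Dirichlet sum, then use a binomial identity to collapse the sum over $j$, and finally evaluate the single surviving log-sum by Stirling's formula.

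First I would expand $\zeta^{(j)}(s) = \sum_{m\geq 1} (-1)^j (\log m)^j / m^s$, which converges absolutely on $\Re(s)=c=1+1/\log T$, and swap the summation with the integral inside the square brackets of $I_1$. Then Lemma \ref{lem:StatPhase} applies with $b_m = (-1)^j (\log m)^j$ (which trivially satisfies $b_m \ll m^\varepsilon$) and logarithmic exponent $k = n-j+1$, which is bounded and hence $o(\log T)$. This turns each integral into a sum over $m \leq T/2\pi$, yielding
\[
I_1 = (-1)^{n+1} \sum_{j=0}^{n} \binom{n}{j} (-1)^j \sum_{m \leq T/2\pi} (\log m)^{n+1} + O\bigl(T^{c-1/2+\varepsilon}\bigr),
\]
and since $c-1 = 1/\log T$, the error is absorbed into $O(T^{1/2+\varepsilon})$.

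Next I would observe that the factor $(\log m)^{n+1}$ is independent of $j$, so the $m$-sum pulls out and the $j$-sum becomes $\sum_{j=0}^n \binom{n}{j}(-1)^j = (1-1)^n$. This vanishes for $n\geq 1$ and equals $1$ for $n=0$. The first case immediately gives $I_1 = O(T^{1/2+\varepsilon})$. For $n=0$, what remains is $-\sum_{m\leq T/2\pi} \log m$, and by Stirling's formula (or partial summation) this equals $-(T/2\pi)\log(T/2\pi) + T/2\pi + O(\log T)$, matching the claimed asymptotic.

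The only real obstacle is verifying that the technical hypotheses of Lemma \ref{lem:StatPhase} hold uniformly in $j\in\{0,\dots,n\}$: the Dirichlet coefficients satisfy the required polynomial-size bound, the exponent $k=n-j+1$ is bounded (hence $o(\log T)$) for each fixed $n$, and the integration is on a fixed vertical line with $c>1$. With those in hand the proof reduces to a clean binomial cancellation and a textbook evaluation of the partial sum of $\log m$.
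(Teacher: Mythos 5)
Your proposal matches the paper's proof step for step: substitute the Dirichlet series for $\zeta^{(j)}$, invoke Lemma~\ref{lem:StatPhase} to convert each integral into a sum over $m \leq T/2\pi$, collapse the $j$-sum via $\sum_{j=0}^{n}\binom{n}{j}(-1)^j = (1-1)^n$, and for $n=0$ evaluate $-\sum_{m\leq T/2\pi}\log m$ by Stirling. The bookkeeping on the error term (noting $c-1 = 1/\log T$ so $T^{c-1/2}(\log T)^k \ll T^{1/2+\varepsilon}$) and the check that $k = n-j+1 = o(\log T)$ are exactly the technical points one needs, and you handled them correctly.
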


\subsection{The integral $I_2$}
Similarly to when we evaluated the integral $I_1$, we want to apply stationary phase calculations to extract terms for the asymptotic from $I_2$. To do this we begin by switching the order of integration and summation and substitute the Dirichlet series for $\zeta^{(j)} (s) $ and the series for the logarithmic derivative of $Z_1 (s)$ to obtain
\begin{align*}
    I_2 =& (-1)^{n+1} \sum_{j=0}^n \binom{n}{j} \\ 
    &\quad \times \left[ \frac{1}{2 \pi i} \int_{c+i}^{c+iT}  \chi (1-s)  \left( \log \frac{t}{2 \pi} \right)^{n-j} \sum_{m_1=1}^{\infty} \frac{(-1)^j (\log m_1)^j}{m_1^s} \sum_{m_2=1}^{\infty} \frac{(-\Lambda (m_2))}{m_2^s} \ ds \right] \\
    &+ \sum_{k=1}^{\infty} (-1)^{n+1} \sum_{j=0}^n \binom{n}{j} \\ 
    &\quad \times \left[ \frac{1}{2 \pi i} \int_{c+i}^{c+iT}  \chi (1-s) \frac{1}{f(s)^k}  \left( \log \frac{t}{2 \pi} \right)^{n-j} \sum_{m_1=1}^{\infty} \frac{(-1)^j (\log m_1)^j}{m_1^s}  \sum_{m_2=1}^{\infty}\frac{a_k(m_2))}{m_2^s} \ ds \right] \\
    &= I_{2,1} + I_{2,2},
\end{align*}
say, where we split the initial term of $Z_1'(s)/Z_1 (s)$ from the rest as $I_{2,1}$ follows directly from stationary phase methods, while $I_{2,2}$ has a more involved argument.

For $I_{2,1}$ we may use Lemma \ref{lem:StatPhase} to write
\[
I_{2,1} = (-1)^{n+1} \sum_{j=0}^n \binom{n}{j} \left[ \sum_{m_1m_2 \leq T/2 \pi} (-1)^j (\log m_1)^j (- \Lambda (m_2)) (\log m_1 m_2)^{n-j}  \right] + O \left( T^{1/2 + \varepsilon}\right).
\]
Switching the order of summation, expanding the logarithms, and performing some basic algebraic manipulations allows us to rewrite this as
\[
I_{2,1} = (-1)^{n}  \sum_{m_1m_2 \leq T/2 \pi} \Lambda (m_1) (\log m_1)^{n} + O \left( T^{1/2 + \varepsilon}\right).
\]

We note that this calculation was preformed in Hughes and Pearce-Crump \cite{HugPC22}. It is, in essence, the negative of the result given in Theorem \ref{thm:HugPC}. That is, 
\begin{multline}\label{eq:I21}
    I_{2,1} = (-1)^{n} \frac{1}{n+1} \frac{T}{2 \pi} \left( \log  \frac{T}{2 \pi} \right)^{n+1}  \\
    + (-1)^{n} \sum_{k=0}^n \binom{n}{k}  (-1)^ {k}  k!  \left(-1 + \sum_{j=0}^k \frac{1}{j!}\gamma_j \right) \frac{T}{2 \pi} \left( \log  \frac{T}{2 \pi} \right)^{n-k}  \\
    - n! A_n  \frac{T}{2 \pi}  + O\left(T^{1/2 + \varepsilon}\right) 
\end{multline}
where we remind the reader that the $\gamma_j$ are the coefficients from the Laurent expansion of $\zeta (s)$ about $s=1$ and the $A_n$ are the coefficients from the Laurent expansion of $\zeta'(s)/\zeta (s)$ about $s=1$.

Finally we need to evaluate $I_{2,2}$. By \eqref{eq:ChiPrimeOverChi} and \eqref{eq:f(s)}, we can write
\begin{equation*}
f(s) = -\frac{1}{2} \frac{\chi'}{\chi} (s) = \frac{1}{2} \log \frac{t}{2 \pi} + O\left( \frac{1}{t} \right)
\end{equation*}
and so
\begin{equation*}
    \frac{1}{f(s)^k} = \frac{2^k}{(\log t/2\pi)^k} \left( 1 + O\left( \frac{1}{t \log t} \right) \right)^{-k} = \frac{2^k}{(\log t/2\pi)^k} \left( 1 + O\left( \frac{k}{t \log t} \right) \right).
\end{equation*}
Then 
\begin{align*}
I_{2,2} &= \sum_{k=1}^{\infty} (-1)^{n+1} \sum_{j=0}^n \binom{n}{j} \\
&\hphantom{<lotsoftext>} \times \frac{1}{2 \pi i} \int_{c+i}^{c+iT}  \chi (1-s) \frac{2^k}{(\log t/2\pi)^k} \left( 1 + O\left( \frac{k}{t \log t} \right) \right) \\
&\hphantom{<lotsoftextlotsoftext>} \times \left( \log \frac{t}{2 \pi} \right)^{n-j} \sum_{m_1=1}^{\infty} \frac{(-1)^j (\log m_1)^j}{m_1^s}  \sum_{m_2=1}^{\infty}\frac{a_k(m_2))}{m_2^s} \ ds \\
&= \sum_{k=1}^{\infty} (-1)^{n+1} \sum_{j=0}^n \binom{n}{j} \\
&\hphantom{<lotsoftext>} \times \frac{1}{2 \pi i} \int_{c+i}^{c+iT}  \chi (1-s) \frac{2^k}{(\log t/2\pi)^k}  \left( \log \frac{t}{2 \pi} \right)^{n-j} \\
&\hphantom{<lotsoftextlotsoftext>} \times\sum_{m_1=1}^{\infty} \frac{(-1)^j (\log m_1)^j}{m_1^s}  \sum_{m_2=1}^{\infty}\frac{a_k(m_2)}{m_2^s} \ ds + O\left( T^{1/2 + \varepsilon} \right).
\end{align*}

After these initial manipulations performed above, we may apply Lemma \ref{lem:StatPhase}. This gives
\begin{align}
    I_{2,2} &= (-1)^{n+1} \sum_{k=1}^{\infty} 2^k \sum_{j=0}^{n} \binom{n}{j} \left[ \sum_{m_1 m_2 \leq T/2 \pi} \frac{ (-1)^j (\log m_1)^j a_k(m_2) (\log m_1 m_2)^{n-j}}{(\log m_1 m_2)^k} \right] \notag \\
    &\qquad + O\left(T^{1/2 + \varepsilon}\right) \label{eq:I22}.
\end{align}

\begin{remark}
Now we note that we could repeat our trick that we used when switching the order of summation and expanding the logarithm when we evaluated $I_{2,1}$. However it is then unclear what the correct Dirichlet series is to use in the upcoming Perron argument. Instead we take the approach of cancelling the appropriate powers of the logarithm, and work with $I_{2,2}$ in this form. 
\end{remark}

Switching the order of summation in $I_{2,2}$ gives 
\begin{equation}\label{eq:I2,2}
    I_{2,2} = (-1)^{n+1} \sum_{k=1}^{\infty} 2^k \sum_{j=0}^{n} \binom{n}{j} \left[ \sum_{m_1 m_2 \leq T/2 \pi} \frac{ (-1)^j (\log m_1)^j a_k(m_2)}{(\log m_1 m_2)^{k-n+j}} \right] + O\left(T^{1/2 + \varepsilon}\right)
\end{equation}

To evaluate $I_{2,2}$, we begin with evaluating the numerator of the inner sum. After calculating the sum of the numerator of the inner sum, we can preform partial summation to reinsert the logarithm in the denominator of the inner sum, and then sum over $j$ and $k$. 

\begin{lemma}\label{lem:Ak}
    Let 
    \[
    A_{k,j}(x) = \sum_{m \leq x} \alpha_{k,j}(m),
    \]
    where 
    \[
    \alpha_{k,j}(m) = \sum_{m_1 m_2 = m} (-1)^j (\log m_1)^j a_k(m_2)
    \]
    and where $a_k(m_2)$ is given in \eqref{eq:ak(n)}.
    This sum can also be written as
    \[
    A_{k,j}(x) = \sum_{1 \leq m n_1 n_2 ... n_k  \leq x} (-1)^j(\log m)^j \log (n_1) \Lambda (n_1) \Lambda (n_2) \dots \Lambda (n_k) .
    \]
    Then for large $x$,
    \[
    A_{k,j}(x) = x \sum_{\ell=0}^{k+j+1} \frac{c_{\ell}^{k,j}}{(k+j+1-\ell)!} (\log x)^{k+j+1-\ell} + O\left(x^{1/2+\varepsilon}\right)
    \]
    where the $c_{\ell}^{k,j}$ are the Laurent series coefficients around $s=1$ of
    \[
    \left(\frac{\zeta '}{\zeta} (s) \right)' \left(-\frac{\zeta '}{\zeta} (s) \right)^{k-1} \zeta^{(j)} (s) \frac{1}{s} = \sum_{\ell=0}^\infty c_{\ell}^{k,j} (s-1)^{-k-j-2+\ell} .
    \]
\end{lemma}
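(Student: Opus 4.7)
The plan is to recognise $A_{k,j}(x)$ as the summatory function of the Dirichlet coefficients of an explicit meromorphic function and extract the main term via Perron's formula plus a contour shift. First I identify the generating series. Since $\sum_{m_1\geq 1}(-1)^j(\log m_1)^j m_1^{-s}=\zeta^{(j)}(s)$ for $\Re(s)>1$, and since $\Lambda\log$ has Dirichlet series $(\zeta'/\zeta(s))'$ while $\Lambda_{k-1}$ (in the paper's convention, a $(k-2)$-fold convolution of $\Lambda$) has Dirichlet series $(-\zeta'/\zeta(s))^{k-1}$, the coefficients $\alpha_{k,j}(m)$ are those of
\[
F_{k,j}(s):=\zeta^{(j)}(s)\left(\frac{\zeta'}{\zeta}(s)\right)'\left(-\frac{\zeta'}{\zeta}(s)\right)^{k-1},
\]
which converges absolutely for $\Re(s)>1$. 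Applying the truncated Perron formula at $c=1+1/\log x$ gives
\[
A_{k,j}(x)=\frac{1}{2\pi i}\int_{c-iT}^{c+iT} F_{k,j}(s)\,\frac{x^s}{s}\,ds+E(x,T),
\]
with $E(x,T)$ the standard Perron error, to be absorbed into the final bound after choosing $T$ as an appropriate power of $x$ (for instance $T=x$).

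Next I shift the contour to $\Re(s)=1/2+\varepsilon$. Under the Riemann Hypothesis $\zeta(s)\neq 0$ in the strip $1/2<\Re(s)<1$, so $F_{k,j}(s)/s$ is meromorphic there with its only singularity at $s=1$. Because $\zeta(s)$ has a simple pole at $s=1$, the factors $\zeta^{(j)}(s)$, $(\zeta'/\zeta(s))'$ and $(-\zeta'/\zeta(s))^{k-1}$ contribute poles of orders $j+1$, $2$ and $k-1$ respectively, so $F_{k,j}(s)/s$ has a pole of exact order $N:=k+j+2$ with the Laurent expansion $\sum_{\ell\geq 0}c_\ell^{k,j}(s-1)^{-N+\ell}$ defined in the statement. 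Multiplying this by the Taylor expansion $x^s=x\sum_{m\geq 0}(s-1)^m(\log x)^m/m!$ and reading off the coefficient of $(s-1)^{-1}$ yields
\[
\Res_{s=1}\frac{F_{k,j}(s)\,x^s}{s}=x\sum_{\ell=0}^{k+j+1}\frac{c_\ell^{k,j}}{(k+j+1-\ell)!}(\log x)^{k+j+1-\ell},
\]
which is precisely the claimed main term.

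The principal remaining task, and the main obstacle, is to show that the shifted vertical integral and the horizontal connectors contribute only $O(x^{1/2+\varepsilon})$. On $\Re(s)=1/2+\varepsilon$ one combines Littlewood's conditional estimate $\zeta'/\zeta(s)\ll \log\log t$ (which also controls $(\zeta'/\zeta)'$ by a standard variant) with the convexity/Lindel\"of bound $\zeta^{(j)}(1/2+\varepsilon+it)\ll t^{\varepsilon}$, so the integrand is $\ll t^{\varepsilon}/t$ and the vertical piece is $O(x^{1/2+\varepsilon})$. The horizontal segments at heights $\pm T$ are handled by the usual device of adjusting $T$ by $O(1)$ to keep a uniform distance from the zeros of $\zeta$, after which the same bounds apply and the contribution is $\ll x^{c} T^{-1+\varepsilon}$. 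Choosing $T=x$ absorbs the Perron error $E(x,T)$ into $O(x^{1/2+\varepsilon})$. Combining the residue with these estimates yields the stated asymptotic, and the equivalent convolution expression for $A_{k,j}(x)$ follows immediately by unfolding $a_k(m_2)=((\Lambda\log)\ast\Lambda_{k-1})(m_2)$ inside the definition of $\alpha_{k,j}$.
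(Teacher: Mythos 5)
Your proposal is correct and follows essentially the same route as the paper: identify $A_{k,j}(x)$ as the summatory function of the Dirichlet coefficients of $\left(\zeta'/\zeta\right)'(s)\left(-\zeta'/\zeta\right)^{k-1}(s)\zeta^{(j)}(s)$, apply truncated Perron, shift the contour past the pole of order $k+j+2$ at $s=1$ to $\Re(s)=1/2+\varepsilon$, and read off the residue by multiplying the Laurent expansion against $x^s = x\sum_m (s-1)^m(\log x)^m/m!$. The only cosmetic difference is that the paper integrates on $\Re(s)=3/2$ with truncation parameter $R$ rather than your $c=1+1/\log x$, and it leaves the bounds on the shifted vertical line and horizontal connectors implicit where you spell out the conditional $\zeta'/\zeta$ and Lindel\"of-type estimates; the residue computation and the final $O(x^{1/2+\varepsilon})$ error agree with the paper's.
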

    
\begin{remark}
    A long but straightforward calculation tells us what the coefficients $c_{\ell}^{k,j}$ equal. The leading coefficient is when $\ell = 0$ and is given by
    \[
    c_{0}^{k,j} = (-1)^j j!
    \]
    for all $j \geq 0$. When $\ell = 1$, the subleading coefficients are given by
    \begin{align*}
        c_{1}^{k,j} = 
        \begin{cases}
        -1 + \gamma_0 + (1-k) \gamma_0 &\text{ if $j=0$} \\
        (-1)^j j! (-1 + (1-k) \gamma_0) &\text{ for all $j \geq 1$.} 
        \end{cases}
    \end{align*}
\end{remark}

\begin{remark}
    The superscripts on the coefficients $c_{\ell}^{k,j}$ is to indicate that the terms depend on both $k$ and $j$. 
\end{remark}

\begin{proof}
    We begin by applying Perron's formula to $A_{k,j}(x)$, that is, we have 
\begin{equation*}
    A_{k,j}(x) = \frac{1}{2 \pi i} \int_{3/2-i R}^{3/2+i R} \left(\frac{\zeta '}{\zeta} (s) \right)' \left(-\frac{\zeta '}{\zeta} (s) \right)^{k-1} \zeta^{(j)} (s) \frac{x^s}{s} \ ds + O\left(\frac{x^{3/2+\varepsilon}}{R}\right)
\end{equation*}

Note that the integrand has a pole of order $k+j+2$ at $s=1$. We shift past the pole at $s=1$ to the line $\Re(s)=1/2+\varepsilon$, and evaluate the residue at $s=1$. Therefore, up to a power-saving error, $A_{k,j}(x)$ will equal the residue of the integrand around that point.  That is, 
\[
A_{k,j}(x) = x \sum_{\ell=0}^{k+j+1} b_{\ell}^{k,j} (\log x)^{k+j+1-\ell} + O\left(R^{\varepsilon} x^{1/2+\varepsilon}\right) + O\left(\frac{x^{3/2+\varepsilon}}{R}\right)
\]
for some constants $b_{\ell}^{k,j}$. We have started the sum at the largest power of $\log x$ as this simplifies future calculations. Choosing $R=x$ optimises the error terms, giving an error of $O(x^{1/2+\varepsilon})$.

To evaluate the residue, we use the various Laurent expansions about $s=1$ in the various terms in the integrand. The expansions we need are
\begin{align*}
    &\left( \frac{\zeta'}{\zeta}(s)\right)' &=\quad& \frac{1}{(s-1)^2}+\left(-2 \gamma_1-\gamma_0^2\right)+ \left(6 \gamma_0  \gamma_1+3 \gamma_2+2 \gamma_0^3\right) (s-1) + \dots \\
    & \left(- \frac{\zeta'}{\zeta}(s)\right)^{k-1} &=\quad& \frac{1}{(s-1)^{k-1}} +\frac{(1-k) \gamma_0}{(s-1)^{k-2}} + \frac{\frac12k(k-1)\gamma_0^2 + 2(k-1)\gamma_1}{(s-1)^{k-3}} + \dots\\
    &\quad  \zeta^{(j)} (s) &=\quad& \frac{(-1)^j j!}{(s-1)^{j+1}} + (-1)^j \gamma_j + (-1)^{j+1} \gamma_{j+1} (s-1) + \dots\\
    &\qquad    \frac{1}{s}&=\quad& 1 - (s-1) + (s-1)^2 + ...
\end{align*}
where the Stieltjes constants are given in \eqref{eq:LaurentZeta}.

Combining these Laurent expansions allows us to write 
    \[
    \left(\frac{\zeta '}{\zeta} (s) \right)' \left(-\frac{\zeta '}{\zeta} (s) \right)^{k-1} \zeta^{(j)} (s) \frac{1}{s} = \sum_{\ell=0}^\infty c_{\ell}^{k,j} (s-1)^{-k-j-2+\ell}
    \]
where we have explicitly given the first few values of $c_{\ell}^{k,j}$ in the remark following the statement of the lemma. 

The residue is the coefficient of $1/(s-1)$ which will come from combining these terms with the $x^s$ in the integrand (which is the only term that contains an $x$). Its expansion about $s=1$ is
\[
x^s = x \left( 1 + (s-1) \log x + \frac{(s-1)^2}{2!} (\log x)^2 + ... + \frac{(s-1)^k}{k!} (\log x)^k +...\right) 
\]
so the coefficient of $(\log x)^{k+j+1-\ell}$ in the $(s-1)^{-1}$ term equals
\[
b_{\ell}^{k,j} = \frac{c_{\ell}^{k,j}}{(k+j+1-\ell)!}
\]
and such a combination is possible for $\ell$ between $0$ and $k+j+1$.
\end{proof}

To complete the proof of Theorem \ref{thm:FirstMomnthDerivRho1}, we want to use the asymptotic from Lemma \ref{lem:Ak} in \eqref{eq:I2,2} and use partial summation to reinsert the logarithm in the denominator of the inner sum in \eqref{eq:I22}.

This means that we need to calculate 
\[
I_{2,2} = (-1)^{n+1} \sum_{k=1}^{\infty} 2^k \sum_{j=0}^{n} \binom{n}{j} \left[ A_k \left( \frac{T}{2 \pi} \right) f\left( \frac{T}{2 \pi} \right) - \int_{2}^{T/2\pi} A_k(x) f'(x) \ dx \right] + O\left(T^{1/2 + \varepsilon}\right)
\]
with $f(x) = 1/(\log x)^{k-n+j}$, so that $f'(x) = -(k-n+j)/(x (\log x)^{k-n+j+1})$, and with $A_{k,j} (x)$ as in Lemma \ref{lem:Ak}. 

Note that while we want to sum the inner sum in $I_{2,2}$ over all positive integers $m_1,m_2$ such that $1 \leq m_1m_2 \leq T/2\pi$, this introduces a problem with the logarithms appearing in the denominator. Instead we will sum from $2$ to avoid this problem, at the cost of a $O(1)$ error which we ignore as it is smaller than our largest error. 

Then by partial summation,
\begin{align}
    I_{2,2} &= (-1)^{n+1} \sum_{k=1}^{\infty} 2^k \sum_{j=0}^{n} \binom{n}{j} \notag \\ 
    &\hphantom{spacespace} \left[ \left( \frac{T}{2 \pi} \right) \sum_{\ell=0}^{k+j+1} \frac{c_{\ell}^{k,j}}{(k+j+1-\ell)!} L^{n+1-\ell} \right. \notag \\
    &\hphantom{spacespacespacespace} + \left. (k-n+j) \sum_{\ell=0}^{k+j+1} \frac{c_{\ell}^{k,j}}{(k+j+1-\ell)!} \int_{2}^{T/2\pi}  (\log x)^{n-\ell} \ dx \right]\notag \\
    &\hphantom{spacespacespacespacespacespacespacespace}+ O\left(T^{1/2 + \varepsilon}\right) \label{eq:I221}
\end{align}
where we write $L=\log T/2\pi$.

\begin{lemma}\label{lem:J1J2}
    Writing $L = \log T/2\pi$, for any integer $K \geq 1$, we have
    \begin{align*}
    I_{2,2} &= (-1)^{n+1} \frac{T}{2\pi} \sum_{k=1}^{\infty} 2^k \sum_{j=0}^{n} \binom{n}{j} \left( L^{n+1} \frac{c_{0}^{k,j}}{(k+j+1)!} \right. \\
    &\hphantom{spacespa}+\sum_{\ell = 0}^{n} L^{n-\ell} \left[   \frac{c_{\ell+1}^{k,j}}{(k+j-\ell)!} + (k-n+j) \frac{c_{\ell}^{k,j}}{(k+j+1-\ell)!}  \right.\\
    &\hphantom{spacespacespa} \left. \left. + (k-n+j) \sum_{m=0}^{\ell-1} (-1)^{\ell -m} (n-m) \dots (n-\ell+1) \frac{c_{m}^{k,j}}{(k+j+1-m)!}  \right]  \right) \\
    &+(-1)^{n+1} \frac{T}{2\pi} \sum_{m=1}^{K} \frac{1}{L^{m}} \sum_{k=1}^{\infty} 2^k \sum_{j=0}^{n} \binom{n}{j} \beta_{m}^{k,j} + O \left( \frac{T}{L^{K+1}} \right),
    \end{align*}
    where for $1 \leq m \leq k+j-n$, we have
    \[
    \beta_{m}^{k,j} = \frac{c_{m+n+1}^{k,j}}{(k+j-n-m)!} + \frac{(m-1)!}{k-n+j-1} \sum_{\ell=0}^{m-1}  \binom{k-n+j}{\ell} c_{\ell+n+1}^{k,j}
    \] 
    and where for $m \leq k+j-n+1$, we have
    \[
    \beta_{m}^{k,j} = \frac{(m-1)!}{k-n+j-1} \sum_{\ell=0}^{k+j-n}  \binom{k-n+j}{\ell} c_{\ell+n+1}^{k,j},
    \]
    where the constants $c_{\ell}^{k,j}$ are given in Lemma \ref{lem:Ak}.
\end{lemma}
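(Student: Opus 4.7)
The plan is to start from the partial-summation representation \eqref{eq:I221} of $I_{2,2}$, evaluate the elementary integrals $\int_{2}^{T/2\pi} (\log x)^{n-\ell} \, dx$ explicitly, and collect contributions by power of $L = \log(T/2\pi)$. The boundary piece of \eqref{eq:I221} already has the form $(T/2\pi)\sum_{\ell} \tfrac{c_{\ell}^{k,j}}{(k+j+1-\ell)!} L^{n+1-\ell}$, so the substantive work is distributing the integral piece among the various powers of $L$ and matching it with the boundary contribution.

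I would split the range $0 \le \ell \le k+j+1$ at $\ell = n$. For $\ell \le n$, iterated integration by parts (with $u = (\log x)^{n-\ell}$, $dv = dx$) gives the exact identity
\[
\int_{2}^{T/2\pi} (\log x)^{n-\ell} \, dx = \frac{T}{2\pi} \sum_{s=0}^{n-\ell} (-1)^{n-\ell-s} \frac{(n-\ell)!}{s!} L^{s} + O(1).
\]
For $\ell \ge n+1$, setting $p = \ell - n \ge 1$ and integrating by parts with $u = (\log x)^{-p}$ yields the asymptotic expansion
\[
\int_{2}^{T/2\pi} \frac{dx}{(\log x)^{p}} = \frac{T}{2\pi} \sum_{r=0}^{M-1} \frac{(p+r-1)!}{(p-1)!} \frac{1}{L^{p+r}} + O\!\left( \frac{T}{L^{p+M}} \right),
\]
which I truncate uniformly so that every truncation error is $O(T/L^{K+1})$. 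Together with the $O(T^{1/2+\varepsilon})$ already produced by the Perron step in Lemma \ref{lem:Ak}, this yields the claimed error term.

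Next I would collect the coefficient of each power of $L$. The $L^{n+1}$ coefficient arises only from the boundary part at $\ell = 0$, giving $c_{0}^{k,j}/(k+j+1)!$. For $0 \le \ell \le n$, the coefficient of $L^{n-\ell}$ receives three contributions: (i) the boundary part at index $\ell+1$, giving $c_{\ell+1}^{k,j}/(k+j-\ell)!$; (ii) the integral part at the diagonal index $\ell$, where the polynomial expansion picks off $s = n-\ell$ and contributes $(k-n+j)\, c_{\ell}^{k,j}/(k+j+1-\ell)!$; and (iii) the integral part at indices $m$ with $0 \le m < \ell$, where the polynomial selects $s = n-\ell$ with sign $(-1)^{\ell-m}$ and coefficient $(n-m)!/(n-\ell)! = (n-m)(n-m-1)\cdots(n-\ell+1)$. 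Summing (i)--(iii) reproduces the bracketed expression in the statement.

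For the negative powers $L^{-m}$ with $m \ge 1$, the contributions come from (a) the boundary part at $\ell = n+m+1$, which exists only when $m \le k+j-n$ and contributes $c_{n+m+1}^{k,j}/(k+j-n-m)!$, and (b) the integral part at indices $\ell = n+p$ for $1 \le p \le \min(m, k+j+1-n)$, each contributing $(k-n+j)\, c_{n+p}^{k,j}\,(m-1)!/[(k+j+1-n-p)!\,(p-1)!]$. Reindexing $p = \ell+1$ and using $\tfrac{1}{(k-n+j-\ell)!\,\ell!} = \tfrac{1}{(k-n+j)!}\binom{k-n+j}{\ell}$ converts (b) into the binomial-weighted sum in the statement, and the two displayed forms of $\beta_{m}^{k,j}$ simply record whether the boundary contribution (a) is present ($m \le k+j-n$) or absent ($m \ge k+j-n+1$). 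The main obstacle is purely combinatorial bookkeeping: matching the falling-factorial coefficients, handling the two regimes for $\beta_{m}^{k,j}$, and keeping track of the upper limits of the inner sums as $m$ crosses the threshold $k+j-n$. Once the cases are separated, each integration-by-parts step is routine.
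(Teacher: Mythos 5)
Your proposal is correct and takes essentially the same approach as the paper: start from the partial-summation form \eqref{eq:I221}, split the index range at the threshold where the powers of $L$ change sign, evaluate each integral by iterated integration by parts (exactly for non-negative powers, asymptotically with a uniform $O(T/L^{K+1})$ truncation for negative powers), collect coefficients of each power of $L$, and reindex via a binomial identity. One small discrepancy worth flagging: your identity
\[
\frac{1}{(k-n+j-\ell)!\,\ell!} = \frac{1}{(k-n+j)!}\binom{k-n+j}{\ell}
\]
is correct, and chasing it through gives $\beta_m^{k,j}$ with the factor $(m-1)!/(k-n+j-1)!$, whereas the lemma as printed (and the paper's own proof, which invokes the false simplification $\tfrac{k-n+j}{(k+j+1-n-\ell)!(\ell-1)!} = \binom{k-n+j}{\ell-1}\tfrac{1}{k-n+j-1}$) has $(m-1)!/(k-n+j-1)$ without the factorial; the version your algebra produces is the one that actually follows, so the printed statement contains a typographical error rather than your proof containing a gap.
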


\begin{proof}
    We begin by noting from \eqref{eq:I221} that we can obtain the leading order, that is, the $L^{n+1}$, from the first sum within the square brackets by setting $\ell = 0$. There are then positive and negative powers of $L^{n}$ which come from both sums in the square brackets. 

    We begin by splitting the sums into those that contribute to non-negative powers of $L$, and those that contribute to negative powers of $L$. This is because we can evaluate the positive powers exactly, while the negative powers are slightly more complicated. The positive powers occur for $\ell \leq n+1$ in the first sum, and $\ell \leq n$ in the second sum from the square brackets. This splitting gives
    \begin{align*}
        I_{2,2} &= (-1)^{n+1} \sum_{k=1}^{\infty} 2^k \sum_{j=0}^{n} \binom{n}{j} \notag \\ 
        &\hphantom{spacespace} \left[ \left( \frac{T}{2\pi} \sum_{\ell = 0}^{n+1} \frac{c_{\ell}^{k,j}}{(k+j+1-\ell)!} L^{n+1-\ell} \right. \right. \notag \\
        &\hphantom{spacespacespacespace} \left. + (k-n+j) \sum_{\ell=0}^{n} \frac{c_{\ell}^{k,j}}{(k+j+1-\ell)!}  \int_{2}^{T/2\pi} (\log x)^{n-\ell} \ dx \right) \notag \\
        &\hphantom{spacespace} \left. + \left( \frac{T}{2\pi} \sum_{\ell = n+2}^{k+j+1} \frac{c_{\ell}^{k,j}}{(k+j+1-\ell)!} L^{n+1-\ell} \right. \right. \notag \\
        &\hphantom{spacespacespacespace} \left. \left. + (k-n+j) \sum_{\ell=n+1}^{k+j+1} \frac{c_{\ell}^{k,j}}{(k+j+1-\ell)!}  \int_{2}^{T/2\pi} (\log x)^{n-\ell} \ dx \right) \notag \right] \\
        &\hphantom{spacespacespacespacespacespacespacespace}+ O\left(T^{1/2 + \varepsilon}\right)
    \end{align*}
     
For ease of notation, we write the sum $I_{2,2}$ as 
\begin{equation*}
    I_{2,2} = (-1)^{n+1} \sum_{k=1}^{\infty} 2^k \sum_{j=0}^{n} \binom{n}{j} \left[ J_1 + J_2 \right] + O\left(T^{1/2 + \varepsilon}\right),
\end{equation*}
say, where both $J_1$ and $J_2$ depend on $k$ and $j$, and the $J_1$ sum is for the non-negative powers of the logarithm, and the $J_2$ sum is for the negative powers of the logarithm.

We now consider the two sums $J_1$ and $J_2$ in turn. 

Consider $J_1$ first. Recall that for $m \geq 0$, we have
\[
\int_{2}^{T/2\pi} (\log x)^m \ dx = \frac{T}{2 \pi} \sum_{r=0}^{m} (-1)^r \frac{m!}{(m-r)!} L^{m-r} + O(1).
\]
Then using this (and ignoring the error term as it is consumed by larger error terms that we have already seen in this proof), we have
\begin{align*}
    J_1 & =\frac{T}{2\pi}\left[  \sum_{\ell = 0}^{n+1} \frac{c_{\ell}^{k,j}}{(k+j+1-\ell)!} L^{n+1-\ell} \right. \\
    &\hphantom{spacespace} \left. + (k-n+j) \sum_{\ell=0}^{n} \frac{c_{\ell}^{k,j}}{(k+j+1-\ell)!} \left(  \sum_{r=0}^{n-\ell} (-1)^r \frac{(n-\ell)!}{(n-\ell-r)!} L^{n-\ell-r} \right) \right].
\end{align*}
We first note that we can extract the leading order behaviour from the first sum, and reindex the remaining terms in the first sum to obtain
\begin{align*}
    J_1 & =\frac{T}{2\pi} L^{n+1} \frac{c_{0}^{k,j}}{(k+j+1)!} \\
    &\hphantom{spacespa}+ \frac{T}{2\pi}\left[  \sum_{\ell = 0}^{n} \frac{c_{\ell+1}^{k,j}}{(k+j-\ell)!} L^{n-\ell} \right.\\
    &\hphantom{spacespacespa} \left. + (k-n+j) \sum_{\ell=0}^{n} \frac{c_{\ell}^{k,j}}{(k+j+1-\ell)!} \left(  \sum_{r=0}^{n-\ell} (-1)^r \frac{(n-\ell)!}{(n-\ell-r)!} L^{n-\ell-r} \right) \right].
\end{align*}
We now reorder the second summation to write
\begin{align*}
    J_1 & =\frac{T}{2\pi} L^{n+1} \frac{c_{0}^{k,j}}{(k+j+1)!} \\
    &\hphantom{spacespa}+ \frac{T}{2\pi} \sum_{\ell = 0}^{n} \left[   \frac{c_{\ell+1}^{k,j}}{(k+j-\ell)!} + (k-n+j) \frac{c_{\ell}^{k,j}}{(k+j+1-\ell)!}  \right.\\
    &\hphantom{spacespacespa} \left. + (k-n+j) \sum_{m=0}^{\ell-1} (-1)^{\ell -m} (n-m) \dots (n-\ell+1) \frac{c_{m}^{k,j}}{(k+j+1-m)!}  \right] L^{n-\ell}
\end{align*}
where if $\ell=0$ the last sum is empty.

We now calculate the contribution from $J_2$. By a simple relabelling, we have
\begin{align*}
    J_2 &= \frac{T}{2\pi} \sum_{\ell = 1}^{k+j+1} \frac{c_{\ell+n+1}^{k,j}}{(k+j-n-\ell)!} L^{-\ell} \\
    &\hphantom{spacespacespacespace} + (k-n+j) \sum_{\ell=1}^{k+j+1-n} \frac{c_{\ell+n}^{k,j}}{(k+j+1-n-\ell)!}  \int_{2}^{T/2\pi} (\log x)^{-\ell} \ dx,
\end{align*}
so both of our sums in $J_2$ start from $\ell=1$. 

Next we note that in the remaining integrals, all of the powers of $\log x$ are negative, so evaluate to an infinite chain of descending powers of $L$. For example, for $1 \leq m \leq M$, we have
\begin{align*}
    \int_{2}^{T/2\pi} \frac{1}{(\log x)^m} \ dx & = \frac{T}{2\pi}\frac{1}{L^m} + m \int_{2}^{T/2\pi} \frac{1}{(\log x)^{m+1}} \ dt + O(1)\\
    &=  \frac{T}{2\pi} \sum_{r=m}^{M} \frac{(r-1)!}{(m-1)!}\frac{1}{L^{r}} 
   + O\left( \frac{T}{L^{M+1}} \right).
\end{align*}

We write $\beta_{m}^{k,j}$ for the coefficient of $L^{-m}$ and split into two cases.

If $m \leq k+j-n$, then there will be a contribution from the first sum in $J_2$ when $m=\ell$ and from all integrals in the second sum for $\ell = 1,\dots, m$. We have
\[
\beta_{m}^{k,j} = \frac{c_{m+n+1}^{k,j}}{(k+j-n-m)!} + (k-n+j) \sum_{\ell=1}^{m} \frac{c_{\ell+n}^{k,j}}{(k+j+1-n-\ell)!} \frac{(m-1)!}{(\ell -1)!}.
\]
Using the fact
\[
\frac{(k-n+j)}{(k+j+1-n-\ell)!(\ell -1)!} = \binom{k-n+j}{\ell-1} \frac{1}{k-n+j-1},
\]
this simplifies to 
\[
\beta_{m}^{k,j} = \frac{c_{m+n+1}^{k,j}}{(k+j-n-m)!} + \frac{(m-1)!}{k-n+j-1} \sum_{\ell=1}^{m} \binom{k-n+j}{\ell-1} c_{\ell+n}^{k,j} .
\]

If $m \geq k+j-n+1$, then there will only be a contribution coming from the second sum of $J_2$, but every term in that sum contributes. We have
\[
\beta_{m}^{k,j} = \frac{(m-1)!}{k-n+j-1} \sum_{\ell=1}^{k+j+1-n}  \binom{k-n+j}{\ell-1} c_{\ell+n}^{k,j}.
\]
After a trivial relabelling for both versions of $\beta_{m}^{k,j}$, we have the statement of the lemma.
\end{proof}

We can see that we have
\begin{align*}
    I_{2,2} = \mathfrak{a}_{n+1} \frac{T}{2\pi}L^{n+1} + \frac{T}{2\pi} \sum_{\ell=0}^{n} \mathfrak{a}_{n-\ell} L^{n-\ell} + \frac{T}{2\pi} \sum_{m=1}^{K} \frac{\mathfrak{b}_m}{L^{m}} + O\left(\frac{T}{L^{K+1}} \right),
\end{align*}
where the leading order coefficient is given by
    \[
    \mathfrak{a}_{n+1} = (-1)^{n+1} \sum_{k=1}^{\infty} 2^k \sum_{j=0}^{n} \binom{n}{j} \frac{c_{0}^{k,j}}{(k+j+1)!},
    \]
the subleading, non-negative logarithm power coefficients are given by
    \begin{align*}
        \mathfrak{a}_{n-\ell} = (-1)^{n+1}& \sum_{k=1}^{\infty} 2^k \sum_{j=0}^{n} \binom{n}{j} \left[ \frac{c_{\ell+1}^{k,j}}{(k+j-\ell)!} + (k-n+j) \frac{c_{\ell}^{k,j}}{(k+j+1-\ell)!} \right. \\
        &+ \left. (k-n+j) \sum_{m=0}^{\ell-1} (-1)^{\ell -m} (n-m) \dots (n-\ell+1) \frac{c_{m}^{k,j}}{(k+j+1-m)!}  \right],
    \end{align*}
and the subleading, negative logarithm power coefficients are given by
\begin{align*}
    \mathfrak{b}_{m} = (-1)^{n+1}& \sum_{k=1}^{\infty} 2^k \sum_{j=0}^{n} \binom{n}{j} \beta_{m}^{k,j},
    \end{align*}
    where for $1 \leq m \leq k+j-n$, we have
    \[
    \beta_{m}^{k,j} = \frac{c_{m+n+1}^{k,j}}{(k+j-n-m)!} + \frac{(m-1)!}{k-n+j-1} \sum_{\ell=0}^{m-1}  \binom{k-n+j}{\ell} c_{\ell+n+1}^{k,j}
    \] 
    and where for $m \leq k+j-n+1$, we have
    \[
    \beta_{m}^{k,j} = \frac{(m-1)!}{k-n+j-1} \sum_{\ell=0}^{k+j-n}  \binom{k-n+j}{\ell} c_{\ell+n+1}^{k,j},
    \]
where the constants $c_{\ell}^{k,j}$ are given in Lemma \ref{lem:Ak}.

By Lemma \ref{lem:J1J2} we can see that to complete calculating $I_{2,2}$, all we need to do is sum over $j=0, \dots, n$ and over $k \geq 1$.

In the remark following Lemma \ref{lem:Ak} we calculated $c_{0}^{k,j}$ and $c_{1}^{k,j}$ explicitly, and so we could calculate the leading and subleading terms in $I_{2,2}$, using our result for $J_1$.

In this case, to leading order we would have
    \[
    (-1)^{n+1} \frac{T}{2 \pi} L^{n+1} \sum_{k=1}^{\infty} 2^k \sum_{j=0}^{n} \binom{n}{j}\frac{c_{0}^{k,j}}{(k+j+1)!}.
    \]
Substitute the value of $c_{0}^{k,j}$ from Lemma \ref{lem:Ak} to obtain
    \begin{align*}
    &(-1)^{n+1} \frac{T}{2 \pi} \left(\log \frac{T}{2 \pi} \right)^{n+1} \sum_{k=1}^{\infty} 2^k \sum_{j=0}^{n} \binom{n}{j} \frac{(-1)^j j!}{(k+j+1)!} \\
    =& (-1)^{n+1} \frac{T}{2 \pi} \left(\log \frac{T}{2 \pi} \right)^{n+1} \sum_{k=1}^{\infty}  \left( \frac{2^k}{k!(k+n+1)} \right),
    \end{align*}
where the second line follows after summing over $j$. Now we need to sum over $k$. For this sum we obtain
    \begin{align*}
    \sum_{k=1}^{\infty}  \frac{2^k}{k! (k+n+1)} &= \frac{e^2-1}{n+1} + (-1)^{n+1} \left( \frac{\Gamma(n+2) - \Gamma (n+2,-2)}{2^{n+1} (n+1)} \right)  \\
    &= \frac{e^2-1}{n+1} + (-1)^{n+1} \frac{n!}{2^{n+1}} \left(1 - e^2 \sum_{k=0}^{n+1} \frac{(-2)^k}{k!} \right)
    \end{align*}
where we have used the incomplete gamma function in simplifying the expressions above, which for integer $n$ is given by
    \[
    \Gamma (n,x)= (n-1)!e^{-x}\sum_{k=0}^{n-1} \frac{x^k}{k!}.
    \]
This means that to leading order, we have
    \[
    I_{2,2} \sim (-1)^{n+1} \left( \frac{e^2-1}{n+1} + (-1)^{n+1} \frac{n!}{2^{n+1}} \left(1 - e^2 \sum_{k=0}^{n+1} \frac{(-2)^k}{k!} \right) \right) \frac{T}{2 \pi} \left(\log \frac{T}{2 \pi} \right)^{n+1}.
    \]

In a similar way, we can calculate the subleading behaviour, and show that this contributes
    \begin{equation*}
    (-1)^{n+1} \frac{T}{2\pi} L^{n} \left( 1-(1+e^2)\gamma_0 + (-1)^{n+1} \frac{(n+1)!}{2^{n+1}} (-1+2\gamma_0) \left( 1-e^2\sum_{k=0}^{n} \frac{(-2)^k}{k!} \right) \right).
    \end{equation*}
to the asymptotic.

Further calculations from Lemma \ref{lem:Ak} would give rise to more lower order terms in the obvious way.

We then recombine $I_{2,2}$ here with $I_{1}$ from Lemma \ref{lem:I_1} and with $I_{2,1}$ from \eqref{eq:I21}, which gives $I$.

Finally, recall that
\begin{equation*}
    \sum_{0< \lambda \leq T} \zeta^{(n)} \left( \frac{1}{2} + i\lambda \right) =  - \overline{I} + O\left(T^{1/2+\varepsilon}\right)
\end{equation*}
to obtain the asymptotic, proving Theorem \ref{thm:FirstMomnthDerivRho1}.

\section{Outline of the proof of Theorem \ref{thm:FirstMomRho1}}\label{sect:ProofCor1}
We begin by noting that the proof of Theorem \ref{thm:FirstMomRho1} follows in a similar way to that of Theorem \ref{thm:FirstMomnthDerivRho1}. We sketch the differences below but leave the full details to the interested reader.

The initial steps remain the same in the case $n=0$ as in the proof of Theorem \ref{thm:FirstMomnthDerivRho1}. We are able to write 
\begin{equation*}
    \sum_{0< \lambda \leq T} \zeta \left( \frac{1}{2} + i\lambda \right)  = - \overline{I} + O\left( T^{1/2 + \varepsilon} \right)
\end{equation*}
where
\[
I = \frac{1}{2 \pi i} \int_{c+i}^{c+iT} \frac{Z_1'}{Z_1} (1-s) \zeta (1-s) \ ds,
\]
where $c = 1+1/\log T$. 

To begin manipulating $I$ into a form that we can evaluate, we use the logarithmic derivative of the functional equation for $Z_1 (s)$, given in \eqref{eq:logderivZ1FE}, and the functional equation for $\zeta (s)$, given in \eqref{eq:FE}. Then
\begin{align*}
    I &= \frac{1}{2 \pi i} \int_{c+i}^{c+iT} \frac{Z_1'}{Z_1} (1-s) \zeta (1-s) \ ds \\
    &= - \frac{1}{2 \pi i} \int_{c+i}^{c+iT} \log \frac{t}{2 \pi} \chi (1-s) \zeta(s) \ ds - \frac{1}{2 \pi i} \int_{c+i}^{c+iT}  \frac{Z_1'}{Z_1} (s) \chi (1-s) \zeta (s) \ ds + O\left(T^{1/2+\varepsilon} \right)\\\
    &=I_1 + I_2 + O\left(T^{1/2+\varepsilon} \right),
\end{align*}
say. 

Then by Lemma \ref{lem:I_1}, we have
\begin{equation}\label{eq:I_1Zeta}
    I_1 = -\frac{T}{2\pi} \log \frac{T}{2\pi} + \frac{T}{2\pi} + O \left( T^{1/2 + \varepsilon}\right).
\end{equation}

For $I_2$, we have
\begin{align*}
    I_2 &= - \frac{1}{2 \pi i} \int_{c+i}^{c+iT}  \frac{Z_1'}{Z_1} (s) \chi (1-s) \zeta (s) \ ds \\
    &= \frac{1}{2 \pi i} \int_{c+i}^{c+iT} \chi (1-s) \sum_{n=1}^{\infty} \frac{\Lambda (n)}{n^s} \sum_{m=1}^{\infty} \frac{1}{m^s} \ ds \\
    &\qquad - \sum_{k=1}^{\infty} \frac{1}{2 \pi i} \int_{c+i}^{c+iT} \chi (1-s) \frac{1}{f(s)^k} \sum_{n=1}^{\infty}  \frac{a_k (n)}{n^s} \sum_{m=1}^{\infty} \frac{1}{m^s} \ ds\\
    &= I_{2,1} + I_{2,2},
\end{align*}
say.

Then $I_{2,1}$ can be evaluated through a simple application of Lemma \ref{lem:StatPhase} and Perron's theorem, giving
\begin{equation}\label{I_2,1Zeta}
    I_{2,1} = \frac{T}{2\pi} \log \frac{T}{2\pi} - \frac{T}{2\pi} + O \left( T^{1/2 + \varepsilon}\right).
\end{equation}
Combining $I_1$  from \eqref{eq:I_1Zeta} and $I_{2,1}$ from \eqref{I_2,1Zeta} gives a contribution of $O \left( T^{1/2 + \varepsilon}\right)$ to the asymptotic.

Finally, to evaluate $I_{2,2}$, we have after using Lemma \ref{lem:StatPhase},
\begin{align*}
    I_{2,2} &= -\sum_{k=1}^{\infty} \frac{1}{2 \pi i} \int_{c+i}^{c+iT} \chi (1-s) \frac{1}{f(s)^k}\sum_{n=1}^{\infty} \frac{a_k (n)}{n^s} \sum_{m=1}^{\infty} \frac{1}{m^s} \ ds\\
    &= -\sum_{k=1}^{\infty} 2^k \sum_{nm \leq T/2\pi} \frac{a_k(n)}{(\log nm)^k} + O\left( T^{1/2 +\varepsilon} \right).
\end{align*} 

As in the proof of Theorem \ref{thm:FirstMomnthDerivRho1}, we evaluate the numerator of the inner sum in the previous line via Perron. This is actually a special case of Lemma \ref{lem:Ak} in the case $j=0$, so we have the following result.

\begin{lemma}\label{lem:Ak2}
    Let 
    \[
    A_{k}(x) = \sum_{mn \leq x} a_{k}(n),
    \]
    where $a_k(n)$ is given in \eqref{eq:ak(n)}. This sum can also be written as
    \[
    A_{k}(x) = \sum_{1 \leq m n_1 n_2 ... n_k  \leq x} \log (n_1) \Lambda (n_1) \Lambda (n_2) \dots \Lambda (n_k) .
    \]
    Then for large $x$,
    \[
    A_{k}(x) = x \sum_{\ell=0}^{k+1} \frac{c_{k,\ell}}{(k+1-\ell)!} (\log x)^{k+1-\ell} + O\left(x^{1/2+\varepsilon}\right)
    \]
    where the $c_{k,\ell}$ are the Laurent series coefficients around $s=1$ of
    \[
    \left(\frac{\zeta '}{\zeta} (s) \right)' \left(-\frac{\zeta '}{\zeta} (s) \right)^{k-1} \zeta (s) \frac{1}{s} = \sum_{\ell=0}^\infty c_{k,\ell} (s-1)^{-k-2+\ell} .
    \]
\end{lemma}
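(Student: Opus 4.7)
The plan is to observe that Lemma \ref{lem:Ak2} is precisely the $j = 0$ specialization of Lemma \ref{lem:Ak}: setting $j = 0$ in $\alpha_{k,j}(m) = \sum_{m_1 m_2 = m} (-1)^j (\log m_1)^j a_k(m_2)$ collapses the sum to $\sum_{m_1 m_2 = m} a_k(m_2)$, and summing this over $m \leq x$ gives exactly the $A_k(x)$ defined here. Correspondingly, $\zeta^{(j)}(s)$ in the Laurent expansion of Lemma \ref{lem:Ak} becomes $\zeta^{(0)}(s) = \zeta(s)$, matching the Dirichlet series in Lemma \ref{lem:Ak2}. So, strictly speaking, no new argument is required, but for completeness I would rerun the Perron-plus-contour-shift argument in this simpler form.

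First I would verify the Dirichlet series identity. Since $-\zeta'/\zeta(s) = \sum_n \Lambda(n) n^{-s}$, differentiation gives $(\zeta'/\zeta)'(s) = \sum_n \Lambda(n) \log(n) \, n^{-s}$, and therefore
\[
\left(\frac{\zeta'}{\zeta}(s)\right)' \left(-\frac{\zeta'}{\zeta}(s)\right)^{k-1} \zeta(s) = \sum_{N=1}^\infty \frac{1}{N^s} \sum_{mn = N} a_k(n),
\]
where $a_k(n) = ((\Lambda\log)\ast\Lambda_{k-1})(n)$ as in \eqref{eq:ak(n)}. Applying Perron's formula on the line $\Re(s) = 3/2$ then gives
\[
A_k(x) = \frac{1}{2\pi i} \int_{3/2 - iR}^{3/2 + iR} \left(\frac{\zeta'}{\zeta}(s)\right)' \left(-\frac{\zeta'}{\zeta}(s)\right)^{k-1} \zeta(s) \frac{x^s}{s} \, ds + O\!\left(\frac{x^{3/2 + \varepsilon}}{R}\right).
\]

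Next I would shift the contour across the pole at $s = 1$ to the line $\Re(s) = 1/2 + \varepsilon$, which is permissible under the Riemann Hypothesis. The integrand has a pole of order $k+2$ at $s=1$ (order $2$ from $(\zeta'/\zeta)'$, order $k-1$ from $(-\zeta'/\zeta)^{k-1}$, and order $1$ from $\zeta$), and by definition $c_{k,\ell}$ is the $\ell$-th Laurent coefficient of the product, multiplied by $1/s$, about $s=1$. Combining with $x^s = x \sum_{m \geq 0} (s-1)^m (\log x)^m / m!$ and picking out the coefficient of $(s-1)^{-1}$ forces $\ell$ between $0$ and $k+1$, yielding
\[
\operatorname*{Res}_{s=1} = x \sum_{\ell = 0}^{k+1} \frac{c_{k,\ell}}{(k+1-\ell)!} (\log x)^{k+1-\ell}.
\]
The shifted vertical integral and the horizontal segments are controlled, under RH, by the standard polynomial-in-$\log$ bounds on $(\zeta'/\zeta)'$, $(\zeta'/\zeta)^{k-1}$, and $\zeta$ near $\Re(s) = 1/2$; choosing $R = x$ balances the Perron truncation error with the shifted-line contribution at $O(x^{1/2 + \varepsilon})$.

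There is no genuine obstacle here: the only non-trivial inputs (the convergence of the shifted integral under RH and the order of the pole) are identical to those already used in the proof of Lemma \ref{lem:Ak}, and the residue computation is strictly simpler because we do not have to keep track of a Laurent expansion for $\zeta^{(j)}(s)$.
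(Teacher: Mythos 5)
Your proposal is correct and matches the paper exactly: the paper itself states that Lemma~\ref{lem:Ak2} is ``a special case of Lemma~\ref{lem:Ak} in the case $j=0$,'' and your rerun of the Perron-plus-residue argument (pole of order $k+2$, residue against $x^s = x\sum_{m\geq 0}(s-1)^m(\log x)^m/m!$, choice $R=x$) reproduces the proof of Lemma~\ref{lem:Ak} in that specialization.
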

    
\begin{remark}
    A long but straightforward calculation tells us what the coefficients $c_{k,\ell}$ equal. The leading coefficient is when $\ell = 0$ and is given by
    \[
    c_{k,0} = 1.
    \]
    When $\ell = 1$, the subleading coefficient is given by
    \[
    c_{k,1} = -1 + \gamma_0 + (1-k) \gamma_0.
    \]
\end{remark}

Next we use partial summation to calculate 
\[
\sum_{nm \leq T/2\pi} \frac{a_k(n)}{(\log nm)^k} = A_k \left( \frac{T}{2 \pi} \right) f\left( \frac{T}{2 \pi} \right) - \int_{2}^{T/2\pi} A_k(x) f'(x) \ dx,
\]
in the case $f(x) = 1/(\log x)^k$ (so $f'(x) = -k/(x(\log x)^{k+1})$). Then using $A_k(x)$ from Lemma \ref{lem:Ak2}, we have with $L = \log T/2\pi$, 
\begin{multline*}
    \sum_{nm \leq T/2\pi} \frac{a_k(n)}{(\log nm)^k} = \\
    \frac{T}{2 \pi} \sum_{\ell=0}^{k+1} \frac{c_{k,\ell}}{(k+1-\ell)!} L^{1-\ell} 
    + k \sum_{\ell=0}^{k+1} \frac{c_{k,\ell}}{(k+1-\ell)!} \int_{2}^{T/2\pi} (\log x)^{-\ell} \ dx + O\left( T^{1/2 +\varepsilon} \right).
\end{multline*}

As before, we extract the non-negative powers of the logarithm and calculate the coefficients of the negative powers as we clearly have an infinite chain of decreasing powers of the logarithm. This gives
\begin{multline*}
    \sum_{nm \leq T/2\pi} \frac{a_k(n)}{(\log nm)^k} = \frac{T}{2 \pi} \frac{c_{k,0}}{(k+1)!} L + \frac{T}{2 \pi} \frac{c_{k,1}}{k!} + \frac{T}{2 \pi} k \frac{c_{k,0}}{(k+1)!} \\
    + \frac{T}{2 \pi} \sum_{\ell=2}^{k+1} \frac{c_{k,\ell}}{(k+1-\ell)!} L^{1-\ell} + k \sum_{\ell=1}^{k+1} \frac{c_{k,\ell}}{(k+1-\ell)!} \int_{2}^{T/2\pi} (\log x)^{-\ell} \ dx + O\left( T^{1/2 +\varepsilon} \right).
\end{multline*}

For the negative powers of the logarithm, we reindex so both sums start at $\ell=1$ to give
\[
\frac{T}{2 \pi} \sum_{\ell=1}^{k} \frac{c_{k,\ell+1}}{(k-\ell)!} L^{-\ell} + k \sum_{\ell=1}^{k+1} \frac{c_{k,\ell}}{(k+1-\ell)!} \int_{2}^{T/2\pi} (\log x)^{-\ell} \ dx.
\]

Recall that for $1 \leq m \leq M$, we have
\begin{align*}
    \int_{2}^{T/2\pi} \frac{1}{(\log x)^m} \ dx & = \frac{T}{2\pi}\frac{1}{L^m} + m \int_{2}^{T/2\pi} \frac{1}{(\log x)^{m+1}} \ dt + O(1)\\
    &=  \frac{T}{2\pi} \sum_{r=m}^{M} \frac{(r-1)!}{(m-1)!}\frac{1}{L^{r}} 
   + O\left( \frac{T}{L^{M+1}} \right).
\end{align*}

We now write $\beta_m^{k,\ell}$ for the coefficient of $L^{-m}$. In an analogous way to to the proof of Theorem \ref{thm:FirstMomnthDerivRho1}, we have for $m \leq k$, 
    \[
    \beta_{m}^{k,j} = \frac{c_{k,m+1}}{(k-m)!} + k \frac{(m-1)!}{(k-1)!} \sum_{\ell=0}^{m-1} \binom{k}{\ell} c_{k,\ell+1} 
    \] 
and where for $m \leq k+j-n+1$, we have
    \[
    \beta_{m}^{k,j} =  k \frac{(m-1)!}{(k-1)!} \sum_{\ell=0}^{k} \binom{k}{\ell} c_{k,\ell+1} ,
    \]
where the constants $c_{k,\ell}$ are given in Lemma \ref{lem:Ak2}.

In order to complete the proof, we multiply through by $-2^k$, and sum over all $k \geq 1$ to obtain $I_{2,2}$. Since the combination from the rest of the calculation only contributes to the error term, we have evaluated $I$ and so the sum in question. 

Note that since we have a simple form for $c_{k,0}$ and $c_{k,1}$, we can easily calculate the leading and subleading behaviour explicitly, giving for $K$ a positive integer
\begin{equation*}
    \sum_{0< \lambda \leq T} \zeta \left( \frac{1}{2} + i\lambda \right)  = \frac{e^2-3}{2} \frac{T}{2 \pi} L + \frac{3-e^2-4\gamma_0}{2} \frac{T}{2 \pi} + \frac{T}{2 \pi} \sum_{m=1}^{K} \frac{d_m}{L^m} + O_K\left(\frac{T}{L^{K+1}}\right)
\end{equation*}
as $T \rightarrow \infty$, where 
\[
d_m = \sum_{k=1}^{\infty} 2^k \beta_{m}^{k,\ell},
\]
with the $\beta_{m}^{k,\ell}$ as above.

\section{Proof Theorem \ref{thm:ChiRho1}}\label{sect:ProofThmChi1}
The main steps in the proof of Theorem \ref{thm:ChiRho1} follow in a similar way to Theorem \ref{thm:FirstMomnthDerivRho1}. We sketch the steps here for completeness sake.

Under the Riemann Hypothesis we can show that all the zeros off the critical line only contribute to the error term in our asymptotic with a similar justification to the proof of Theorem \ref{thm:FirstMomnthDerivRho1}. Recall $\chi (s) \ll T^{1/2-\sigma}$ for $s=\sigma+it$ with $|t|\ll T$. Write $\rho_{1} = \beta_{1} + i \gamma_{1}$ for a zero of $Z_1(s)$. By part (3) of Lemma \ref{lem:CGLemma}, $Z_1(s)$ has $O(\log T)$ zeros off the critical line (that is, $\beta_1 \neq 1/2$). At the zeros off the critical line, we may use part (4) of Lemma \ref{lem:CGLemma} to obtain
\[
\sum_{\substack{0< \gamma_{1} \leq T \\ \beta_1 \neq 1/2}} \chi (\rho_{1}) \ll T^{1/9} \log T \ll T^{1/9 + \varepsilon}.
\]

Therefore, we have
\begin{equation*}
    \sum_{0< \lambda \leq T} \chi \left( \frac{1}{2} + i\lambda \right) =  \sum_{\substack{0 < \gamma_1 \leq T \\ \beta_1 = 1/2}} \chi \left( \frac{1}{2} + i\gamma_1 \right).
\end{equation*}

We can write this as an integral using Cauchy's theorem, 
\begin{align}\label{eq:CauchyChi}
\frac{1}{2\pi i} \int_{\mathcal{C}} \frac{Z_1'}{Z_1}(s) \chi (s) \ ds &= \sum_{0 < \gamma_1 \leq T} \chi (\rho_1) \notag  \\
&= \sum_{\substack{0 < \gamma_1 \leq T \\ \beta_1 = 1/2}} \chi \left( \frac{1}{2} + i\gamma_1 \right) + O\left(T^{1/9 + \varepsilon} \right)
\end{align}
where $\mathcal{C}$ is a positively oriented contour with vertices $c + i$, $c + iT$, $1-c+iT$, and $1-c +i$, where $c=1+1/\log T$. We may assume, without loss of generality, that the distance from the contour to any zero $\rho_1$ of $Z_1(s)$ is uniformly $\gg 1/\log T$.

We split the integral as 
\begin{align*}
&\frac{1}{2 \pi i} \left( \int_{c+i}^{c+iT} + \int_{c+iT}^{1-c+iT} + \int_{1-c+iT}^{1-c+i} + \int_{1-c+i}^{c+i} \right) \frac{Z_1 '}{Z_1}(s) \chi (s) \ ds \\
&= S^R + S^T + S^L + S^B,
\end{align*}
say. Note that $S^B, S^T, S^R$ are all trivially bounded within error term in a similar way to the proof of Theorem \ref{thm:FirstMomnthDerivRho1}, using the bound $\chi (s) \ll T^{1/2-\sigma}$ where appropriate. This shows that the leading contributions to the asymptotic will then come from $S^L$.

Since $S^B, S^T, S^R$ are all within an error term of $O\left(T^{1/2+\varepsilon}\right)$, all that remains is to evaluate $S^L$. Note that 
\[
S^L = \frac{1}{2 \pi i} \int_{1-c+iT}^{1-c+i} \frac{Z_1 '}{Z_1}(s) \chi (s) \ ds = -\frac{1}{2 \pi i} \int_{c-iT}^{c-i} \frac{Z_1 '}{Z_1}(1-s) \chi (1-s) \ ds = - \overline{I},
\]
where
\begin{equation}
I = \frac{1}{2 \pi i} \int_{c+i}^{c+iT} \frac{Z_1 '}{Z_1}(1-s) \chi (1-s) \ ds. \label{eq:IChi}
\end{equation}

Overall, we have 
\begin{equation*}
    \sum_{0< \lambda \leq T} \chi \left( \frac{1}{2} + i\lambda \right) =  - \overline{I} + O\left(T^{1/2+\varepsilon}\right).
\end{equation*}

Using the version of the functional equation for $Z_1'(s)/Z_1 (s)$ given in \eqref{eq:logderivZ1FE2}, we have
\begin{multline*}
    I = \frac{1}{2 \pi i} \int_{c+i}^{c+iT} \left( - \log \frac{t}{2 \pi} \right) \chi (1-s) \ ds \\
    + \frac{1}{2 \pi i} \int_{c+i}^{c+iT} \left( - \frac{Z_1'}{Z_1} (s) \right) \chi (1-s) \ ds + O\left(T^{1/2+\varepsilon}\right) \\
    = I_{1} + I_{2} + O\left(T^{1/2+\varepsilon}\right),
\end{multline*}
say.

Then $I_1$ doesn't contribute to anything beyond an error term, so we split $I_2$ as before to write
\begin{multline*}
    I_2 = \frac{1}{2 \pi i} \int_{c+i}^{c+iT} \left( \sum_{n=1}^{\infty} \frac{\Lambda (n)}{n^s} \right) \chi (1-s) \ ds \\
    - \sum_{k=1}^{\infty} \frac{1}{2 \pi i} \int_{c+i}^{c+iT} \chi (1-s) \frac{1}{f(s)^k} \sum_{n=1}^{\infty} \frac{a_k (n)}{n^s} \ ds + O\left(T^{1/2+\varepsilon}\right)\\
    = I_{2,1} + I_{2,2} + O\left(T^{1/2+\varepsilon}\right).
\end{multline*}

Then applying Lemma \ref{lem:StatPhase} to $I_{2,1}$ we have
\[
I_{2,1} = \sum_{n \leq T/2\pi} \Lambda (n) + O\left(T^{1/2+\varepsilon}\right).
\]
By Perron/the Prime Number Theorem, we have
\begin{equation}\label{eq:ChiI21}
I_{2,1} = \frac{T}{2 \pi} + O\left(T^{1/2+\varepsilon}\right).
\end{equation}

Finally, applying Lemma \ref{lem:StatPhase} to $I_{2,2}$ gives
\[
I_{2,2} = - \sum_{k=1}^{\infty} 2^k \sum_{n \leq T/2\pi} \frac{a_k(n)}{(\log n)^k} + O\left(T^{1/2+\varepsilon}\right).
\]

As in the proof of Theorem \ref{thm:FirstMomnthDerivRho1}, we evaluate the numerator of the inner sum in the previous line via Perron. 

\begin{lemma}\label{lem:Ak3}
    Let 
    \[
    A_{k}(x) = \sum_{n \leq x} a_{k}(n),
    \]
    where $a_k(n)$ is given in \eqref{eq:ak(n)}. This sum can also be written as
    \[
    A_{k}(x) = \sum_{1 \leq  n_1 n_2 ... n_k  \leq x} \log (n_1) \Lambda (n_1) \Lambda (n_2) \dots \Lambda (n_k) .
    \]
    Then for large $x$,
    \[
    A_{k}(x) = x \sum_{\ell=0}^{k} \frac{c_{k,\ell}}{(k-\ell)!} (\log x)^{k-\ell} + O\left(x^{1/2+\varepsilon}\right)
    \]
    where the $c_{k,\ell}$ are the Laurent series coefficients around $s=1$ of
    \[
    \left(\frac{\zeta '}{\zeta} (s) \right)' \left(-\frac{\zeta '}{\zeta} (s) \right)^{k-1}  \frac{1}{s} = \sum_{\ell=0}^\infty c_{k,\ell} (s-1)^{-k-1+\ell} .
    \]
\end{lemma}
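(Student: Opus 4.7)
The plan is to follow exactly the Perron-based strategy used in Lemma \ref{lem:Ak} and Lemma \ref{lem:Ak2}; indeed, this lemma is the analogue of Lemma \ref{lem:Ak2} with the auxiliary $\sum_m$ factor removed, so the only structural change is that the pole at $s=1$ has order $k+1$ rather than $k+2$. First I would identify the generating Dirichlet series for $a_k(n)$. Since $-\zeta'/\zeta(s) = \sum_n \Lambda(n) n^{-s}$ for $\Re(s)>1$, termwise differentiation gives $(\zeta'/\zeta)'(s) = \sum_n \Lambda(n)(\log n) n^{-s}$, and by the paper's convention $\Lambda_{k-1}$ has Dirichlet series $(-\zeta'/\zeta(s))^{k-1}$. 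Hence the convolution $a_k = (\Lambda \log) \ast \Lambda_{k-1}$ has
\[
\sum_{n=1}^{\infty} \frac{a_k(n)}{n^s} = \left(\frac{\zeta'}{\zeta}(s)\right)' \left(-\frac{\zeta'}{\zeta}(s)\right)^{k-1}, \qquad \Re(s)>1.
\]

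Next I would apply truncated Perron,
\[
A_k(x) = \frac{1}{2\pi i}\int_{3/2 - iR}^{3/2 + iR} \left(\frac{\zeta'}{\zeta}(s)\right)'\left(-\frac{\zeta'}{\zeta}(s)\right)^{k-1} \frac{x^s}{s}\,ds + O\!\left(\frac{x^{3/2+\varepsilon}}{R}\right),
\]
and shift the line of integration past $s=1$ to $\Re(s) = 1/2 + \varepsilon$. Under the Riemann Hypothesis, $\zeta$ is non-vanishing in the resulting strip, and standard bounds for $\zeta'/\zeta$ there combined with convexity estimates on the horizontal segments show the shifted integral to be $O(R^{\varepsilon} x^{1/2 + \varepsilon})$. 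Taking $R = x$ balances the two error terms and yields a total error of $O(x^{1/2 + \varepsilon})$, exactly as in Lemmas \ref{lem:Ak} and \ref{lem:Ak2}.

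To compute the residue at the pole $s=1$, which has order $k+1$ (two from $(\zeta'/\zeta)'$ and $k-1$ from $(-\zeta'/\zeta)^{k-1}$), I would combine the Laurent expansion stated in the lemma,
\[
\left(\frac{\zeta'}{\zeta}(s)\right)'\left(-\frac{\zeta'}{\zeta}(s)\right)^{k-1}\frac{1}{s} = \sum_{\ell=0}^{\infty} c_{k,\ell}(s-1)^{-k-1+\ell},
\]
with $x^s = x \sum_{m \geq 0} (s-1)^m (\log x)^m / m!$. The coefficient of $(s-1)^{-1}$ is picked up when $m = k-\ell$, which forces $0 \leq \ell \leq k$ and contributes $c_{k,\ell}(\log x)^{k-\ell}/(k-\ell)!$. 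Summing over $\ell$ and multiplying by $x$ yields the stated main term.

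The only delicate step is the vertical bound for the shifted integrand, since $(-\zeta'/\zeta)^{k-1}$ grows with $k$; however, the lemma is only applied for a fixed $k$ inside the convergent outer $k$-sums in the proofs of Theorems \ref{thm:FirstMomnthDerivRho1}--\ref{thm:ChiRho1}, so the implicit constants may depend on $k$ without affecting the subsequent arguments. Since the Dirichlet-series identification, contour shift, and residue computation are all direct specialisations of those carried out for Lemmas \ref{lem:Ak} and \ref{lem:Ak2}, no genuinely new obstacle arises.
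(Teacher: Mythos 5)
Your proposal is correct and mirrors the paper's own approach: the paper proves Lemma~\ref{lem:Ak} in detail by Perron's formula, contour shift past the order-$(k+j+2)$ pole at $s=1$, and a residue computation from the stated Laurent coefficients, and then states Lemmas~\ref{lem:Ak2} and \ref{lem:Ak3} as specializations of the same argument. Your Dirichlet-series identification $\sum a_k(n)n^{-s} = (\zeta'/\zeta)'(s)\,(-\zeta'/\zeta(s))^{k-1}$, the order-$(k+1)$ pole count, the choice $R=x$, and the extraction of the $(s-1)^{-1}$ coefficient all match the paper's method exactly.
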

    
\begin{remark}
    A long but straightforward calculation tells us what the coefficients $c_{k,\ell}$ equal. The leading coefficient is when $\ell = 0$ and is given by
    \[
    c_{k,0} = 1.
    \]
    When $\ell = 1$, the subleading coefficient is given by
    \[
    c_{k,1} = -1 + \gamma_0 -k \gamma_0.
    \]
\end{remark}

Next we use partial summation to calculate 
\[
\sum_{n \leq T/2\pi} \frac{a_k(n)}{(\log n)^k} = A_k \left( \frac{T}{2 \pi} \right) f\left( \frac{T}{2 \pi} \right) - \int_{2}^{T/2\pi} A_k(x) f'(x) \ dx,
\]
in the case $f(x) = 1/(\log x)^k$ (so $f'(x) = -k/(x(\log x)^{k+1})$). Then using $A_k(x)$ from Lemma \ref{lem:Ak3}, we have with $L = \log T/2\pi$, 
\begin{multline*}
    \sum_{n \leq T/2\pi} \frac{a_k(n)}{(\log n)^k} = \\
    \frac{T}{2 \pi} \sum_{\ell=0}^{k} \frac{c_{k,\ell}}{(k-\ell)!} L^{-\ell} 
    + k \sum_{\ell=0}^{k} \frac{c_{k,\ell}}{(k-\ell)!} \int_{2}^{T/2\pi} (\log x)^{-1-\ell} \ dx + O\left( T^{1/2 +\varepsilon} \right).
\end{multline*}

As before, we extract the non-negative powers of the logarithm and calculate the coefficients of the negative powers as we clearly have an infinite chain of decreasing powers of the logarithm. This gives
\begin{multline*}
    \sum_{n \leq T/2\pi} \frac{a_k(n)}{(\log n)^k} = \frac{T}{2 \pi} \frac{c_{k,0}}{k!} L \\
    + \frac{T}{2 \pi} \sum_{\ell=1}^{k} \frac{c_{k,\ell}}{(k-\ell)!} L^{-\ell} + k \sum_{\ell=0}^{k} \frac{c_{k,\ell}}{(k-\ell)!} \int_{2}^{T/2\pi} (\log x)^{-1-\ell} \ dx + O\left( T^{1/2 +\varepsilon} \right).
\end{multline*}

For the negative powers of the logarithm, we reindex so both sums start at $\ell=1$ to give
\[
\frac{T}{2 \pi} \sum_{\ell=1}^{k} \frac{c_{k,\ell}}{(k-\ell)!} L^{-\ell} + k \sum_{\ell=1}^{k+1} \frac{c_{k,\ell-1}}{(k+1-\ell)!} \int_{2}^{T/2\pi} (\log x)^{-\ell} \ dx.
\]

Recall that for $1 \leq m \leq M$, we have
\begin{align*}
    \int_{2}^{T/2\pi} \frac{1}{(\log x)^m} \ dx & = \frac{T}{2\pi}\frac{1}{L^m} + m \int_{2}^{T/2\pi} \frac{1}{(\log x)^{m+1}} \ dt + O(1)\\
    &=  \frac{T}{2\pi} \sum_{r=m}^{M} \frac{(r-1)!}{(m-1)!}\frac{1}{L^{r}} 
   + O\left( \frac{T}{L^{M+1}} \right).
\end{align*}

We now write $\beta_m^{k,\ell}$ for the coefficient of $L^{-m}$. In an analogous way to to the proof of Theorem \ref{thm:FirstMomnthDerivRho1}, we have for $m \leq k$, 
    \[
    \beta_{m}^{k,j} = \frac{c_{k,m}}{(k-m)!} + \frac{(m-1)!}{(k-1)!} \sum_{\ell=0}^{m-1} \binom{k}{\ell} c_{k,\ell} 
    \] 
and where for $m \leq k+j-n+1$, we have
    \[
    \beta_{m}^{k,j} =  \frac{(m-1)!}{(k-1)!} \sum_{\ell=0}^{k} \binom{k}{\ell} c_{k,\ell} ,
    \]
where the constants $c_{k,\ell}$ are given in Lemma \ref{lem:Ak3}.

In order to complete the proof, we multiply through by $-2^k$, and sum over all $k \geq 1$ to obtain $I_{2,2}$. Once we add in the contribution from $I_{2,1}$ given in \eqref{eq:ChiI21}, we have evaluated $I$ and so the sum in question. 

Note that since we have a simple form for $c_{k,0}$ and $c_{k,1}$, we can easily calculate the leading and subleading behaviour explicitly, giving for $K$ a positive integer
\begin{equation*}
    \sum_{0< \lambda \leq T} \chi \left( \frac{1}{2} + i\lambda \right)  = (e^2-2) \frac{T}{2 \pi} - 4e^2\gamma_0 \frac{T}{2 \pi} \frac{1}{L} + \frac{T}{2 \pi} \sum_{m=2}^{K} \frac{e_m}{L^m} + O_K\left(\frac{T}{L^{K+1}}\right)
\end{equation*}
as $T \rightarrow \infty$, where 
\[
e_m = \sum_{k=1}^{\infty} 2^k \beta_{m}^{k,\ell},
\]
with the $\beta_{m}^{k,\ell}$ as above.

\section{Outline of the proof of Theorem \ref{thm:ChiRho}}\label{sect:ProofThmChi}
The proof of Theorem \ref{thm:ChiRho} is very similar to that of Theorem \ref{thm:ChiRho1}. We just sketch out the main steps here. Note that this result can be made unconditional but we assume the Riemann Hypothesis to make the error terms trivial to deal with.

We begin by using Cauchy's theorem to write 
\[
\sum_{0< \gamma \leq T} \chi \left( \frac{1}{2} + i\gamma \right) = \frac{1}{2\pi i} \int_{\mathcal{C}} \frac{\zeta'}{\zeta}(s) \chi (s) \ ds, 
\]
where $\mathcal{C}$ is a positively oriented contour with vertices $c + i$, $c + iT$, $1-c+iT$, and $1-c +i$, where $c=1+1/\log T$. We may assume, without loss of generality, that the distance from the contour to any zero of $\zeta (s)$ is uniformly $\gg 1/\log T$.

Note that the integrals along the bottom, top and right-hand side of the contour are bounded by $O\left( T^{1/2+\varepsilon}\right)$, using the bound $\chi (s) \ll T^{1/2-\sigma}$ for $s=\sigma+it$ with $|t|\ll T$. As before, the only contribution comes from the left-hand side of the contour, and as before, we have 
\[
\frac{1}{2 \pi i} \int_{1-c+iT}^{1-c+i} \frac{\zeta}{\zeta}(s) \chi (s) \ ds = -\frac{1}{2 \pi i} \int_{c-iT}^{c-i} \frac{\zeta}{\zeta}(1-s) \chi (1-s) \ ds = - \overline{I},
\]
where
\begin{equation*}
I = \frac{1}{2 \pi i} \int_{c+i}^{c+iT} \frac{\zeta}{\zeta}(1-s) \chi (1-s) \ ds.
\end{equation*}

Overall, we have 
\begin{equation*}
    \sum_{0< \gamma \leq T} \chi \left( \frac{1}{2} + i\gamma \right) =  - \overline{I} + O\left(T^{1/2+\varepsilon}\right).
\end{equation*}

Next we need the functional equation for $\zeta'(s)/\zeta (s)$, which can be derived easily from the functional equation for $\zeta (s)$ in \eqref{eq:FE}. This gives
\begin{equation*}
\frac{\zeta '}{\zeta} (1-s) = -\log \frac{t}{2\pi} - \frac{\zeta '}{\zeta} (s) + O\left( \frac{1}{|t|} \right).
\end{equation*}
Substituting this into $I$ gives
\begin{multline*}
I = - \frac{1}{2 \pi i} \int_{c+i}^{c+iT} \log \frac{t}{2\pi} \chi (1-s) \ ds \\ 
- \frac{1}{2 \pi i} \int_{c+i}^{c+iT} \frac{\zeta'}{\zeta} (s) \chi (1-s) \ ds + O\left(T^{1/2+\varepsilon}\right) \\
= I_1 + I_2 + O\left(T^{1/2+\varepsilon}\right).
\end{multline*}
Clearly $I_1$ doesn't contribute to the asymptotic, so consider $I_2$. Then by Lemma \ref{lem:StatPhase},
\[
I_2 = \sum_{n \leq T/2\pi} \Lambda (n) + O\left(T^{1/2+\varepsilon}\right).
\]
Then by Perron/the Prime Number Theorem,
\[
I_2 = \frac{T}{2\pi} + O\left(T^{1/2+\varepsilon}\right).
\]
The result then follows by working back up the proof.

\begin{remark}
    We could see this result directly from the proof of Theorem \ref{thm:ChiRho1}, noticing that the logarithmic derivative of $\zeta (s)$ is included in that of $Z_1 (s)$, and making the obvious changes to the proof. We have included the basic steps above for clarity. 
\end{remark}

\section*{Acknowledgements}
The majority of this paper constitutes the last chapter of my PhD thesis \cite{theAPC}. I would like to thank Chris Hughes and Sol Lugmayer for the many helpful discussions we had while we wrote \cite{HugLugPea24}, which prompted me to consider the problems found in this paper. Thanks also goes to Chris Hughes for generating the graphs found in this paper. Thanks also to Nathan Ng for some helpful suggestions on how to write the results.

\bibliographystyle{abbrv}
\bibliography{bibliography}	

\end{document}